\documentclass[11pt,twoside,a4paper]{amsart}

 %%%%%%%%%%%%% SEPARACIîN ENTRE PçRRAFOS %%%%%%%%%%%%%%%
\setlength{\parskip}{1ex plus 0.5ex minus 0.2ex}
 %%%%%%%%%%%%%%%%%%%%%%%%%%%%%%%%%%%%%%%%%%%%%%%

\usepackage{amsfonts,amsmath,amsthm}
\usepackage{hyperref}

\newtheorem{thm}{Theorem}[section]

\newtheorem{lem}[thm]{Lemma}
\newtheorem{pro}[thm]{Proposition}
\newtheorem{cor}[thm]{Corollary}

\theoremstyle{remark}
\newtheorem{remark}[thm]{Remark}

\theoremstyle{ass}

\newcommand{\R}{\mathbb{R}}
\newcommand{\Rd}{\mathbb{R}^{d}}
\newcommand{\D}{\nabla_{b}}

\newcommand{\p}{\tilde{p}}

%Para que enumere las ecuaciones como 1.2,....
\numberwithin{equation}{section}

\begin{document}

\title[Energy concentration and explicit Sommerfeld condition]{Energy concentration and explicit Sommerfeld radiation condition for the electromagnetic Helmholtz equation}

\author{Miren Zubeldia}

\address{M. Zubeldia: Universidad del Pa\'is Vasco, Departamento de Matem\'aticas, Apartado 644, 48080, Spain}
\email{miren.zubeldia@ehu.es}

\thanks{The author is partially supported by the Spanish grant FPU AP2007-02659 of the MEC, by Spanish Grant MTM2007-62186 and by the ERC Advanced Grant-Mathematical foundations (ERC-AG-PE1) of the European Research Council}

\begin{abstract}
We study the electromagnetic Helmholtz equation
\begin{equation}\notag
(\nabla + ib(x))^{2}u(x) + n(x)u(x) = f(x), \quad x\in\Rd
\end{equation}
with the magnetic vector potential $b(x)$ and $n(x)$ a variable index of refraction that does not necessarily converge to a constant at infinity, but can have an angular dependency like $n(x) \to n_{\infty}\left(\frac{x}{|x|}\right)$ as $|x|\to\infty$. We prove an explicit Sommerfeld radiation condition
\begin{equation}\notag
\int_{\Rd} \left|\D u - in_{\infty}^{1/2}\frac{x}{|x|}u\right|^{2} \frac{dx}{1+|x)} < + \infty 
\end{equation}
for solutions obtained from the limiting absorption principle and we also give a new energy estimate
\begin{equation}\notag
\int_{\Rd}\left| \nabla_{\omega}n_{\infty}\left(\frac{x}{|x|}\right)\right|^{2}\frac{|u|^{2}}{1+|x|} dx < +\infty, 
\end{equation}
which explains the main physical effect of the angular dependence of $n$ at infinity and deduces that the energy concentrates in the directions given by the critical points of the potential. 
\end{abstract}

\date{\today}

\subjclass[2010]{35B45, 35J05, 35Q60, 78A40.}
\keywords{%
magnetic potential, Helmholtz equation, Sommerfeld condition, Energy concentration}

\maketitle

\section{Introduction}

Let us consider the following Helmholtz equation
\begin{equation}\label{***}
(\nabla + ib(x))^{2}u(x) + n(x)u(x) = f(x), \quad x\in \Rd
\end{equation}
where $b=(b_{1},\ldots,b_{d}):\Rd \to \Rd$ is a magnetic potential and $n: \Rd \to \R$ is a variable index of refraction that admits a radial limit
\begin{equation}\label{radiallimit}
n(x) \to n_{\infty}\left(\frac{x}{|x|} \right) \quad \textrm{as} \quad |x|\to \infty.
\end{equation}

In this work we are interested in the study of the existence and uniqueness of solution of the equation (\ref{***}) with an appropriate radiation condition using the limiting absorption method as well as some new estimates that characterize the behavior of the solution at infinity. 

The radiation conditions are known to be necessary for the uniqueness of solutions to (\ref{***}), as first remarked by Sommerfeld \cite{So} in the free case $b \equiv 0 \equiv V$. Moreover, there is a strong connection between this topic and the limiting absorption principle for Sch\"odinger operators, to which a lot of research has been devoted (see for example, \cite{E1}, \cite{IS}, \cite{Ku}, \cite{A}, \cite{S1}, \cite{M1}, \cite{M2}, \cite{MU}, \cite{S2}, \cite{I}, \cite{Be}, \cite{E2}, \cite{S}, \cite{PV2}, \cite{RT}, \cite{Z}). Let us now give a brief picture about the recent advances on Sommerfeld radiation.

As a first result we mention Eidus \cite{E1}, where it is showed that there exists a unique solution $u(\lambda, f)$ of the equation (\ref{***}) with $n(x) = \lambda + V(x)$ in $\R^{3}$ satisfying the radiation condition
\begin{equation}
\lim_{r\to \infty} \int_{|x|=r} \left|\frac{\partial u}{\partial|x|}-i\lambda^{1/2}u \right|^{2} d\sigma(r)=0.
\end{equation}
Here $b_{j}(x)$ is assumed to vanish close to infinity and the electric potential satisfies $V(x)=O(|x|^{-2-\alpha})$ with $\alpha > \frac{1}{6}$ at infinity. In 1972, Ikebe and Saito \cite{IS} extended the result by Eidus to electric potentials of the form $V= \lambda\tilde{p} + Q$, where $\tilde{p}$ is long range and $Q$ is short range, obtaining the precise radiation condition
\begin{equation}\label{is1}
\int_{\Rd} \left|(\nabla + ib)u - i\lambda^{1/2}\frac{x}{|x|}u \right|^{2}\frac{dx}{(1+|x|)^{1-\delta}} < +\infty,
\end{equation}
where $0<\delta<1$ is a fixed constant. This result has been recently improved by Zubeldia in \cite{Z}, by adding some singularities on the potentials at the origin and extending the range of $\delta$ to $0<\delta<2$, in (\ref{is1}). In the purely electric case $b\equiv 0$, under similar assumptions on $V$, Saito proved in \cite{S1}, \cite{S2} another type of radiation condition,
\begin{equation}\notag
\int_{\Rd} |\nabla u - i(\nabla K)u|^{2} \frac{dx}{(1+|x|)^{1-\delta}} < + \infty,
\end{equation}
being $K=K(x,\lambda)$ an exact or approximate solution of the eikonal equation
\begin{equation}\label{eikointro}
|\nabla K|^{2} = \lambda(1+\p(x)).
\end{equation}
Here the principal order of $\nabla K$ has the form
\begin{equation}\notag
\nabla K(x,\lambda)=\Phi(x,\lambda)\frac{x}{|x|}.
\end{equation}
Notice that the unit normal $\frac{x}{|x|}$ to the sphere appears in the radiation condition, in all the above mentioned papers.

In 1987, Saito considers \cite{S} more general long range potentials and proves the existence of a unique solution of the equation (\ref{***}) for $n(x)= \lambda(1+ \p(x)) + Q(x)$ with a 
nonspherical radiation condition
\begin{equation}\label{Saito}
\int_{|x|\geq 1} \left|(\nabla + ib)u - i\sqrt{\lambda}\nabla K u\right|^{2}\frac{dx}{(1+|x|)^{1-\delta}} < +\infty
\end{equation}
where $\nabla K$ is the outward normal of a surface which is not a sphere in general and satisfies the eikonal equation (\ref{eikointro}). More precisely, he considers potentials $p(x)=\lambda\p(x)$ such that $p(x)=O(1)$, $\frac{\partial p}{\partial x_{j}}=O(|x|^{-1})$ and $\frac{\partial^{2}p}{\partial x_{i} \partial x_{j}} = O(|x|^{-2})$ at infinity. Under the same assumptions on the potentials, Barles establishes in \cite{B} the existence of solution of the corresponding eikonal equation for $|x| > R_{0}$, for $R_{0}, \lambda$ large enough.

Some years later, in 2007, Perthame and Vega \cite{PV2} showed that, in the purely electric case $b \equiv 0$, if one puts some further restrictions on the potential $n(x)$ as
\begin{align}\notag
2\sum_{j\in \mathbb{Z}} \sup_{C(j)} \frac{(x\cdot \nabla n(x))_{-}}{n(x)} < 1,
\end{align}
where $C(j)$ denotes the annulus $\{2^{j-1}\leq |x| \leq 2^{j}\}$, while $(a)_{-} = -\min \{0, a\}$ is the negative part of $a\in\R$ and if there exists $n_{\infty} \in C^{3}(S^{d-1})$ such that $n_{\infty}(\omega) \geq n_{0} >0$ with
\begin{equation}\label{barrixalim}
\left| n(x) - n_{\infty}(\omega) \right| \leq n_{\infty}(\omega)\frac{\Gamma}{|x|}, \quad \Gamma >0, \quad n>0,
\end{equation}
then the following precise radiation condition holds
\begin{equation}\notag
\int_{|x|\geq 1} \left|\nabla u - in_{\infty}^{1/2}\frac{x}{|x|}u\right|^{2} \frac{dx}{|x|} < +\infty.
\end{equation}
Note that the spherical term
\begin{equation}
n_{\infty}^{1/2}\left(\frac{x}{|x|}\right)\frac{x}{|x|}
\end{equation}
appears in this formula instead of the gradient of the phase as in (\ref{Saito}). This apparent contradiction can be explained by the existence of some extra energy estimate already announced in \cite{PV4}) of the form
\begin{equation}\label{energyintro}
\int_{\Rd} \left| \nabla_{\omega}n_{\infty}\left( \frac{x}{|x|}\right) \right|^{2}\frac{|u|^{2}}{1+|x|} dx< +\infty,
\end{equation}
for some $\omega = \frac{x}{|x|}\in \mathbb{S}^{d-1}$. Here we define
\begin{equation}\notag
\nabla_{\omega}n(\omega) = \frac{\partial}{\partial\omega}n(\omega) : = |x|\nabla^{\bot}n\left(\frac{x}{|x|}\right)
\end{equation}
and
\begin{equation}\notag
\nabla^{\bot} u(x) = \nabla u(x) - \frac{x}{|x|}\nabla^{r}u(x), \quad \nabla^{r} u(x)=\partial_{r}u(x) := \frac{x}{|x|}\cdot\nabla u(x). 
\end{equation}
Estimate (\ref{energyintro}) says that the points where $|\nabla_{\omega}n_{\infty}(\omega)|$ vanishes on the sphere are the concentration directions for the energy $|u|^{2}$. In other words, the energy is not dispersed in all directions but concentrated on those given by the critical points of the potential. Thus we see that in this case the behavior of the solution at infinity can be very different to the one exhibited by free solutions.

The role played by the critical points of $n_{\infty}$ was already pointed out by Herbst \cite{He}, where is considered the case when $n(x) = \lambda + V(x)$ with
\begin{equation}\notag
V(x)=|x|^{-\sigma}V\left(\frac{x}{|x|}\right) \quad 0<\sigma < 2, \quad \forall x\in \Rd\backslash \{0\}.
\end{equation}
This potential is also studied in \cite{FG}, \cite{GVV} and \cite{HS}, for the study of the counterexamples of Strichartz inequalities for Schr\"odinger equations with repulsive potentials and the existence and completeness of the wave operator.

Let us introduce some notations. For $f:\Rd \to \mathbb{C}$, we define the norms
\begin{equation}\notag
|||f|||_{R_{0}} := \sup_{R>R_{0}} \left(\frac{1}{R}\int_{|x|\leq R} |f(x)|^{2}\right)^{1/2}
\end{equation}
and 
\begin{equation}\notag
N_{R_{0}}(f) := \sum_{j > J}\left(2^{j+1}\int_{C(j)} |f|^{2} \right)^{1/2} + \left(R_{0}\int_{|x|\leq R_{0}} |f|^{2} \right)^{1/2}
\end{equation}
where $J$ is such that $2^{J-1} < R_{0} < 2^{J}$ and $C(j)=\{x\in\Rd : 2^{j}\leq |x|\leq 2^{j+1}\}$. The norms $|||f|||_{1}$ and $N_{1}(f)$ are known as Agmon-H\"ormander norms. We drop the index $R_{0}$ if $R_{0}=0$, getting then the Morrey-Campanato norm and its dual in the sense that 
\begin{equation}\notag
\left| \int fg \right|  \leq |||f|||N(g).
\end{equation}
Moreover, we denote the magnetic gradient by
$$
\nabla_{b} := \nabla + i b
$$
and its tangential component by
$$
|\D^{\bot} u|^{2} = |\D u|^{2} - |\D^{r}u|^{2}, \quad \quad \D^{r}u = \frac{x}{|x|}\cdot \D u.
$$
We also introduce the magnetic field $B(x)$ associated to the magnetic potential $b(x)$ which is given by the $d\times d$ anti-symmetric matrix defined by 
\begin{equation}\notag
B=(Db)-(Db)^{t}, \quad B_{jk}=\left(\frac{\partial b_{j}}{\partial x_{k}}- \frac{\partial b_{k}}{\partial x_{j}}\right) \quad j, k=1,\ldots,d.
\end{equation}
The tangential part of the magnetic field $B$ is given by
$$
B_{\tau}:= \frac{x}{|x|}B(x), \quad \quad (B_{\tau})_{j}=\sum_{k=1}^{d} \frac{x_{k}}{|x|} B_{kj}.
$$
This quantity was introduced by Fanelli and Vega \cite{FV} and is related to singular magnetic potentials. See \cite{FV} and \cite{Z1} for more details.

One of the main contributions of this work is to extend the new energy estimate to the magnetic case, inspired to Perthame and Vega \cite{PV2}. Let us consider the magnetic Helmholtz equation
\begin{equation}\label{*eps}
(\nabla + ib(x))^{2}u(x) + n(x)u(x) + i\varepsilon u(x) = f(x), \quad \varepsilon >0.
\end{equation}
Then our goal is to show the estimate (\ref{energyintro}) for the solution $u$ of this equation using integration by parts.

To this end, we first need to prove suitable a-priori estimates to the solution of the equation (\ref{*eps}). On the one hand, one needs to control the Morrey-Campanato norm of the solution and its magnetic gradient. On the other hand, we emphasize that the estimate for the tangential component of the magnetic gradient
$$
\int_{\Rd} \frac{|\D^{\bot}u|^{2}}{|x|} dx < \infty
$$
turns out to be fundamental. For this purpose, we will require that $n(x)$ and the tangential component of the magnetic field satisfy the condition
\begin{equation}\label{nbtau}
2\sum_{j\in \mathbb{Z}} \sup_{C(j)} \frac{(x\cdot \nabla n(x))_{-} + 2^{2j}|B_{\tau}|^{2}}{n(x)} < 1.
\end{equation}
However, for the energy estimate it will be necessary to put some further restrictions to $n(x)$ as in \cite{PV2} and also to the magnetic field $B$. In fact, we assume that
\begin{equation}\label{assubjk}
\sum_{j\geq 0} 2^{2j}|B_{jk}|^{2} < \infty.
\end{equation}
Moreover, it is required that
\begin{align}\label{1.12}
\textrm{there exists} \quad n_{\infty}\left(\frac{x}{|x|}\right) \in C^{\infty}\left(S^{d-1} \right), \quad n_{\infty}\left(\frac{x}{|x|} \right) \geq n_{0} >0,
\end{align}
and
\begin{align}
\left|n(x) - n_{\infty}\left(\frac{x}{|x|}\right) \right| \leq n_{\infty}\left(\frac{x}{|x|} \right)\frac{\Gamma}{|x|}, \quad \quad \Gamma >0, \quad n>0.\label{1.13'}
\end{align}
Note that from (\ref{1.12}) and (\ref{1.13'}) it may be concluded that 
\begin{equation}\label{condin}
|n| \leq C \quad \quad \textrm{and} \quad \quad n\geq \frac{n_{0}}{2} \quad \textrm{for} \quad |x| \quad \textrm{large enough}.
\end{equation}

We may now state the first result of this paper. 

\begin{thm}\label{Theoremenergy1}
For dimensions $d\geq 3$, we assume (\ref{nbtau})-(\ref{1.13'}). Then the solution of the Helmholtz equation (\ref{*eps}) satisfies, for $R\geq 1$ large enough
\begin{equation}\label{enerthm}
\int_{|x|\geq R} \left|\nabla_{\omega}n_{\infty}\left(\frac{x}{|x|}\right) \right|^{2}\frac{|u|^{2}}{|x|} \leq C(1+\varepsilon)\left(N\left(\frac{f}{n^{1/2}}\right)\right)^{2},
\end{equation}
for some constant $C$ independent of $\varepsilon$ and $n$.
\end{thm}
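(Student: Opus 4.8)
The plan is to run a Morawetz-type (virial) commutator argument adapted to the magnetic operator, in the spirit of Perthame--Vega. Fix $R\ge 1$ so large that $n\ge n_{0}/2$ and $|n|\le C$ on $\{|x|\ge R/2\}$ (cf.\ (\ref{condin})), choose $\chi\in C^{\infty}(\R)$ with $\chi\equiv 1$ on $[R,\infty)$ and $\chi\equiv 0$ on $[0,R/2]$, and set $\varphi(x):=\chi(|x|)\,|x|\,n_{\infty}(x/|x|)$. The choice is dictated by the target: since $n_{\infty}$ is homogeneous of degree $0$, $\nabla[n_{\infty}(x/|x|)]=\frac{1}{|x|}\nabla_{\omega}n_{\infty}$ is tangential, so on $\{|x|\ge R\}$ one has $\nabla\varphi=n_{\infty}\frac{x}{|x|}+\nabla_{\omega}n_{\infty}$ (a bounded field, with radial part $n_{\infty}$ and tangential part $\nabla_{\omega}n_{\infty}$), and therefore
\begin{equation}\notag
\nabla\varphi\cdot\nabla\big[n_{\infty}(x/|x|)\big]=\chi(|x|)\,\frac{|\nabla_{\omega}n_{\infty}|^{2}}{|x|},
\end{equation}
which is the mechanism producing the angular density in (\ref{enerthm}).

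First I would write the magnetic virial identity: testing (\ref{*eps}) against $\overline{Au}$ with the first-order multiplier $A=\nabla\varphi\cdot\D+\tfrac12\Delta\varphi$ and taking real parts yields, after the usual cancellation of the $\int\Delta\varphi|\D u|^{2}$ terms,
\begin{equation}\notag
\int D^{2}\varphi(\D u,\overline{\D u})-\frac14\int\Delta^{2}\varphi\,|u|^{2}+\frac12\int\nabla\varphi\cdot\nabla n\,|u|^{2}+\mathcal{M}_{B}=\mathcal{R},
\end{equation}
where $\mathcal{R}=-\mathrm{Re}\!\int f\,\overline{Au}-\varepsilon\,\mathrm{Im}\!\int u\,\nabla\varphi\cdot\overline{\D u}$ and $\mathcal{M}_{B}$ collects the terms produced by commuting $\D$ past $\nabla\varphi$ (these involve $B$, and through the radial part of $\nabla\varphi$, the tangential field $B_{\tau}$). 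Writing $\nabla n=\nabla[n_{\infty}(x/|x|)]+\nabla(n-n_{\infty})$ and $\nabla=\frac{x}{|x|}\partial_{r}+\nabla^{\bot}$, the potential term becomes
\begin{equation}\notag
\frac12\int_{|x|\ge R}\frac{|\nabla_{\omega}n_{\infty}|^{2}}{|x|}|u|^{2}+\frac12\int\chi\,n_{\infty}\,\partial_{r}n\,|u|^{2}+(\text{tangential long-range error})+(\text{cutoff region}),
\end{equation}
whose first summand is exactly the left-hand side of (\ref{enerthm}).

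The third step is to show every other term is harmless for an upper bound on this first summand. The radial term $\tfrac12\int\chi n_{\infty}\partial_{r}n\,|u|^{2}$ splits via $\partial_{r}n=(\partial_{r}n)_{+}-(\partial_{r}n)_{-}$: its positive part is $\ge 0$ and sits on the favourable side, so it may simply be discarded, while $\tfrac12\int\chi n_{\infty}(\partial_{r}n)_{-}|u|^{2}\le C|||u|||^{2}$ by (\ref{nbtau}) (note $(\partial_{r}n)_{-}=|x|^{-1}(x\cdot\nabla n)_{-}$). The Hessian term is benign because $D^{2}(|x|n_{\infty}(x/|x|))$ has vanishing $rr$-entry and vanishing radial--tangential entries and is $O(1/|x|)$ on the tangential block, so $|\int D^{2}\varphi(\D u,\overline{\D u})|\le C\int_{|x|\ge R/2}|\D^{\bot}u|^{2}/|x|$, exactly the a priori tangential estimate; the bilaplacian term is $O(1/|x|^{3})$, hence $\le C|||u|||^{2}$; and the cutoff-region pieces are compactly supported and $\le C(|||u|||^{2}+|||\D u|||^{2})$. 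The tangential long-range error, after an integration by parts on the spheres, picks up a factor $\nabla_{\omega}n_{\infty}$ and, with $|n-n_{\infty}|\le C/|x|$ (from (\ref{1.13'})) and Young's inequality, is bounded by $\eta\int_{|x|\ge R}|\nabla_{\omega}n_{\infty}|^{2}|u|^{2}/|x|+C_{\eta}\int_{|x|\ge R/2}|\D^{\bot}u|^{2}/|x|$, the first piece being reabsorbed for $\eta$ small. For $\mathcal{M}_{B}$: $B$ is antisymmetric, so $B_{\tau}$ is itself tangential, whence the contribution of the radial part of $\nabla\varphi$ only sees $\D^{\bot}u$ and Cauchy--Schwarz bounds it by $C\big(\int_{|x|\ge R/2}|x||B_{\tau}|^{2}|u|^{2}\big)^{1/2}\big(\int_{|x|\ge R/2}|\D^{\bot}u|^{2}/|x|\big)^{1/2}$ with $\int|x||B_{\tau}|^{2}|u|^{2}\le C|||u|||^{2}\sum_{j}2^{2j}\sup_{C(j)}|B_{\tau}|^{2}<\infty$ by (\ref{nbtau}); the contribution of the tangential part is treated similarly via (\ref{assubjk}). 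Finally $\mathcal{R}$ is estimated by the duality $|\int fg|\le N(f)|||g|||$ and the a priori bounds $|||u|||+|||\D u|||\le C\,N(f/n^{1/2})$ (valid for $d\ge 3$ under (\ref{nbtau})), the $\varepsilon$-term producing the factor $(1+\varepsilon)$; collecting everything on the right of the main summand gives (\ref{enerthm}).

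The main obstacle is $\mathcal{M}_{B}$: unlike in the purely electric case of Perthame--Vega, commuting $\D$ through $\nabla\varphi$ produces non-vanishing magnetic-field terms, and the estimate closes only because (\ref{nbtau}) and (\ref{assubjk}) impose exactly enough decay of $B_{\tau}$ and $B$ to pair them against $\int|\D^{\bot}u|^{2}/|x|$ and the Morrey--Campanato norms; carrying this out rigorously, in particular tracking the weights in every term of $\mathcal{M}_{B}$, is the real work. A subtler structural point is that the main summand must not be reabsorbed: it is small-$\eta$ absorbed only in the tangential long-range and magnetic estimates (so $\eta$ stays small), and the one genuinely uncontrollable term, the \emph{positive} part of the radial potential term, fortunately sits on the favourable side of the identity and can simply be dropped.
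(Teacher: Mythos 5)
Your proposal is correct and follows essentially the same route as the paper: the paper's proof (Theorem \ref{Theoremenergy12}) plugs the multiplier $\psi_{q}=q(|x|/R)\,n_{\infty}(x/|x|)$ into the magnetic Morawetz identity (\ref{(4.3)}) — which for $|x|\geq 2R$ is exactly $R^{-1}$ times your $\varphi$ — and carries out the same decomposition of $\nabla n\cdot\nabla\psi_{q}$ (main angular term, radial term discarded/controlled via (\ref{nbtau}), long-range tangential error integrated by parts and reabsorbed via (\ref{1.13'})), the same Hessian computation exploiting homogeneity of degree one, and the same splitting of the magnetic term into a $B_{\tau}\cdot\D^{\bot}u$ part handled by (\ref{nbtau}) and a tangential part handled by (\ref{assubjk}). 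The only structural difference is that the a priori bounds you quote — in particular the tangential estimate $\int |\D^{\bot}u|^{2}/|x| \leq C(\varepsilon+\Vert n_{2}\Vert_{L^{\infty}})\bigl(N(f/n^{1/2})\bigr)^{2}$, which is where the $B_{\tau}$ part of (\ref{nbtau}) actually does its work — are isolated in the paper as a separate statement (Theorem \ref{Theorem1.1}) and proved there before the energy argument.
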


This theorem is the natural generalization of the result by Perthame and Vega in \cite{PV2}, which as far as we know was not known for first order perturbations of the Helmholtz equation. Nevertheless, it does not seem that these conditions on the potentials are sufficient to prove the limiting absorption principle for the equation (\ref{***}).

In order to get the limiting absorption principle for the electromagnetic Helmholtz equation with long range potentials that in particular include those which are homogeneous of degree zero, we will follow Saito \cite{S}. Let us consider the equation
\begin{equation}\label{**s}
(\nabla + ib(x))^{2} u + n(x)u + Q(x) u = f,
\end{equation}
with $n(x)= \lambda (1+ \p(x))$ where $\p, Q:\Rd \to \R$ can be interpreted as electric potentials. Under suitable assumptions on the potentials, we will prove the existence of a unique solution of the equation (\ref{**s}) satisfying a specific Sommerfeld radiation condition together with some a-priori estimates of Agmon-H\"ormander type. We will use multiplier techniques based on integration by parts, inspired by \cite{PV1}, \cite{F} and \cite{Z}. We work with potentials that decay as in \cite{S} at infinity and the most important issue is that we allow singularities on the potentials at the origin. 

It is worth pointing out that the self-adjointness of the electromagnetic hamiltonian
$$
L = (\nabla + ib(x))^{2} + V(x)
$$
with $V(x)= \lambda\p(x) + Q(x)$ is necessary for the limiting absorption principle. For this purpose, we need to require some local integrability conditions on our potentials. In what follows we will always assume that
\begin{equation}
b_{j} \in L^{2}_{loc}, \quad V\in L^{1}_{loc}, \quad \quad \int V|u|^{2} \leq \nu \int |\nabla u|^{2}, \quad 0<\nu<1.
\end{equation}
As a consequence, it follows (see \cite{Z1}, chapter 1 for more details) that $L$ is self-adjoint in $L^{2}(\Rd)$ with form domain
$$
D(L)= \{f\in L^{2}(\Rd) : \int |\D f|^{2} - \int V|f|^{2} < \infty\}.
$$

We can now state the second main result of the paper.

\begin{thm}\label{d>3}
Let $d\geq 3$, $\p \in C^{2}(\Rd \backslash \{0\})$, $r_{0}\geq 1$ and $\mu >0$. We assume that
\begin{equation}\label{condicionf} 
|B_{jk}(x)| + |Q(x)| \leq \frac{c}{|x|^{1+\mu}}, \quad \textrm{if} \quad |x|\geq r_{0},
\end{equation}
\begin{equation}\label{(Q)}
|Q(x)|\leq \frac{c}{|x|^{2-\alpha}} \quad \textrm{if} \quad |x|\leq r_{0}, \quad 0< \alpha < 2,
\end{equation}
\begin{equation}\label{assuv1}
|\partial^{\alpha}\p(x)| \leq C^{*}|x|^{-\alpha} \quad (|\alpha|\leq 2),
\end{equation}
for some $c>0$, where $\alpha=(\alpha_{1},\ldots,\alpha_{d})$ is an arbitrary multi-index with nonnegative integers $\alpha_{j}$ ($1\leq j \leq d$), $|\alpha|=\alpha_{1} +  \cdots + \alpha_{d}$, $\partial^{\alpha}=\partial_{1}^{\alpha_{1}}\cdots\partial_{d}^{\alpha_{d}}$ and $C^{*}$ is a positive small constant ($0<C^{*} < 1$). In addition, let $c_{1}$ small enough and we consider
\begin{equation}\label{(b1)}
|B| \leq \frac{c}{|x|^{2-\alpha}} \quad  \quad |x|\leq r_{0}, \quad 0<\alpha<2,
\end{equation}
if $d=3$ and
\begin{equation}\label{(b)}
|B| \leq \frac{c_{1}}{|x|^{2}} \quad  \quad |x|\leq r_{0},
\end{equation}
if $d>3$. We also require that the magnetic potential satisfies the condition
\begin{equation}\label{gaugesaito}
|\nabla\cdot A| \leq c|x|^{-2},
\end{equation}
for some $c>0$. Then, for any $\lambda \in [\lambda_{0}, \lambda_{1}]$ with $0<\lambda_{0} < \lambda_{1} < \infty$, there exists a unique solution of the Helmholtz equation (\ref{**s}) satisfying
\begin{align}\label{i}
&\lambda|||u|||_{1}^{2} + |||\D u|||_{1}^{2} \leq C (N_{1}(f))^{2}
\end{align}
and the radiation condition
\begin{equation}\label{radiacion}
\int_{|x|\geq 1}  \left|\D u- i\lambda^{1/2}\nabla K u\right|^{2} \frac{dx}{(1+|x|)^{1-\delta}} \leq C \int_{|x|\geq 1/2}(1+|x|)^{1+\delta}|f|^{2}dx,
\end{equation}
for any $0<\delta\leq 1$ such that $\delta<\mu$, where $C=C(\lambda_{0})$ and $K$ is solution of the eikonal equation
\begin{equation}\label{eiko2}
|\nabla K|^{2}=1 + \p(x).
\end{equation}
\end{thm}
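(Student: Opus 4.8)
\emph{Proof strategy.} The plan is to run a limiting absorption argument: solve the damped equation (\ref{*eps}) for $\varepsilon>0$, establish $\varepsilon$-uniform a priori bounds of Morrey--Campanato type together with the radiation bound, and then pass to the limit $\varepsilon\to0^{+}$, the radiation condition yielding uniqueness of the limiting solution.

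First I would set up the regularization. Since $b_{j}\in L^{2}_{loc}$, $V=\lambda\p+Q\in L^{1}_{loc}$ and $\int V|u|^{2}\leq\nu\int|\nabla u|^{2}$ with $0<\nu<1$, the operator $L=(\nabla+ib)^{2}+V$ is self-adjoint on $L^{2}(\Rd)$ with the stated form domain, so for each $\varepsilon>0$ equation (\ref{*eps}) has a unique solution $u_{\varepsilon}$ in the form domain whenever $f$ lies in $L^{2}$ with the weight appearing on the right of (\ref{i})--(\ref{radiacion}); a density argument reduces a general $f$ with $N_{1}(f)<\infty$ to this case. Everything then rests on the uniform estimates. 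To prove (\ref{i}), I would test (\ref{*eps}) against $\overline{u_{\varepsilon}}$ (real and imaginary parts, the latter controlling $\varepsilon\|u_{\varepsilon}\|_{2}^{2}$) and against a radial magnetic multiplier $g(|x|)\,\D^{r}\overline{u_{\varepsilon}}$ with $g$ bounded and increasing, i.e.\ a magnetic Morawetz/virial computation in the spirit of \cite{PV1}, \cite{F}, \cite{Z}. The commutator of $\D$ with the radial vector field produces the magnetic field $B$ and its tangential part $B_{\tau}$; these are absorbed using the Fanelli--Vega magnetic Hardy inequality with the $B_{\tau}$ correction. The long-range piece $\p$ contributes a term with $x\cdot\nabla\p=O(1)$, controlled by $\lambda\in[\lambda_{0},\lambda_{1}]$ and the smallness of $C^{*}$; the off-origin $Q$ and $B$ contribute $O(|x|^{-1-\mu})$ terms absorbed by the Morrey--Campanato norm, while the singular-at-origin hypotheses (\ref{(Q)}), (\ref{(b1)})/(\ref{(b)}) and the gauge condition (\ref{gaugesaito}) are exactly what makes the local terms, after a Hardy inequality, small multiples of $|||\D u_{\varepsilon}|||_{1}^{2}$ --- which forces $\C$ to be small when $d>3$. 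The outcome is
\[
\lambda|||u_{\varepsilon}|||_{1}^{2}+|||\D u_{\varepsilon}|||_{1}^{2}+\int_{\Rd}\frac{|\D^{\bot}u_{\varepsilon}|^{2}}{|x|}\,dx\leq C\,(N_{1}(f))^{2},
\]
uniformly in $\varepsilon$, the tangential term being the crucial gain.

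For the radiation condition (\ref{radiacion}) I would bring in the eikonal solution $K$ of (\ref{eiko2}), which exists for $|x|\geq R_{0}$ by Barles \cite{B} under the hypotheses (\ref{assuv1}) on $\p$, with principal behaviour $\nabla K=\Phi(x)\tfrac{x}{|x|}$, $\Phi\approx(1+\p)^{1/2}$, and second derivatives $O(|x|^{-1})$; I extend it smoothly to all of $\Rd$. Testing (\ref{*eps}) against $\theta(|x|)\bigl(\D^{r}-i\lambda^{1/2}\partial_{r}K\bigr)\overline{u_{\varepsilon}}$ with $\theta\sim(1+|x|)^{\delta}$ and integrating by parts, the identity $|\nabla K|^{2}=1+\p$ makes the principal terms combine into $\int|\D u_{\varepsilon}-i\lambda^{1/2}\nabla K\,u_{\varepsilon}|^{2}(1+|x|)^{-1+\delta}$ with the right sign; the remainders carry $\nabla^{2}K$, $Q$, $B$ and $\nabla\cdot A$, each decaying fast enough --- this is where $0<\delta\leq1$ with $\delta<\mu$ is used --- to be absorbed into the bound just obtained plus $C\int_{|x|\geq1/2}(1+|x|)^{1+\delta}|f|^{2}$. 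With the uniform estimates in hand I extract $u_{\varepsilon_{k}}\rightharpoonup u$ weakly in the relevant weighted spaces; $u$ solves (\ref{**s}) and inherits (\ref{i}) and (\ref{radiacion}) by weak lower semicontinuity. Uniqueness follows the Rellich scheme: a solution of (\ref{**s}) with $f\equiv0$ satisfying the radiation condition must vanish, since feeding $f=0$ into the $\varepsilon=0$ Morawetz identity and combining with the radiation bound forces $\int_{|x|=R}|u|^{2}\to0$ along a sequence $R\to\infty$, and a unique-continuation/Kato argument then gives $u\equiv0$.

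The main obstacle I anticipate is the bookkeeping in the two multiplier identities: reconciling the strong local singularities of $B$, $Q$ and $\nabla\cdot A$ at the origin with the long-range, homogeneous-of-degree-zero behaviour of $\p$ at infinity inside a single integration by parts, so that every error term is either of Hardy type --- absorbed via the sharp magnetic Hardy/Fanelli--Vega inequality, whence the smallness of $\C$ and $C^{*}$ --- or of Morrey--Campanato type, absorbed by $|||\D u_{\varepsilon}|||_{1}$. The genuinely new difficulty relative to the spherical case \cite{PV2} is the appearance of the eikonal corrections $\nabla^{2}K$, whose summability against the weight $(1+|x|)^{-1+\delta}$ is precisely guaranteed by $\delta<\mu$; controlling these, together with the interplay between the radial multiplier and the tangential magnetic Hardy inequality, is the technical crux.
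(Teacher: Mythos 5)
There is a genuine gap in your first step. You propose to prove the a priori bound (\ref{i}) directly, before the radiation condition, by testing against $\bar u_{\varepsilon}$ and a radial Morawetz multiplier, claiming the long-range contribution ``$x\cdot\nabla\p=O(1)$'' is controlled by the smallness of $C^{*}$. It is not. With the radial vector field the potential produces the term $\tfrac{\lambda}{2}\int_{|x|\geq R}\partial_{r}\p\,|u|^{2}$, and under (\ref{assuv1}) one only has $|\partial_{r}\p|\leq C^{*}|x|^{-1}$ with no sign; bounding this requires $\int_{|x|\geq R}|u|^{2}/|x|$, which is the dyadic sum $\sum_{j}2^{-j}\int_{C(j)}|u|^{2}$ --- each summand is comparable to $|||u|||_{1}^{2}$ and there are infinitely many of them. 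Smallness of $C^{*}$ does not help: what is needed is summability over annuli, i.e.\ precisely the hypothesis (\ref{nbtau})/(\ref{1.9}) of Theorem \ref{Theorem1.1}, which is \emph{not} assumed in Theorem \ref{d>3} (the paper even remarks after Theorem \ref{Theorem1.1} that this condition cannot be relaxed to Coulomb-type decay even with smallness added). For the same reason the tangential gain $\int|\D^{\bot}u|^{2}/|x|$ you claim as output of this step is not available here; it belongs to Theorem \ref{Theorem1.1} and is only combined with Theorem \ref{d>3} later, in Theorem \ref{teoremaexplicit}, where both sets of hypotheses are imposed.

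The paper's proof runs in the opposite order, and that order is essential. One first proves the radiation estimate (Proposition \ref{propositionradiationSaito}) with $|||u|||_{1}^{2}$ left on the right-hand side, using multipliers built from $K$ itself, namely $\nabla\psi=\Psi'(K)\nabla K\,\theta(K)$ and $\varphi$ proportional to $|\nabla K|^{2}(1+K)^{\delta-1}$: the Hessian identity of Lemma \ref{lema2.5} and the relation $\partial_{k}\p=\tfrac{2}{K}\sum_{j}F_{kj}\partial_{j}K$ (a consequence of the eikonal equation) make the dangerous $\nabla\p\cdot\nabla\psi$ term combine with the Hessian remainder into a quadratic form in $\D u-i\sqrt{\lambda}\nabla K u$ whose coefficients $F_{kj}$ vanish as $C^{*}\to0$, so it is absorbed into the Sommerfeld square. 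Only then is (\ref{i}) obtained (Proposition \ref{proposition1}), by integrating the pointwise identity
\begin{equation}\notag
\frac{|\D u|^{2}}{|\nabla K|}+\lambda|\nabla K|\,|u|^{2}=\frac{1}{|\nabla K|}|\D u-i\sqrt{\lambda}\nabla Ku|^{2}+2\sqrt{\lambda}\,\Im\frac{\nabla K}{|\nabla K|}\cdot\D u\,\bar u
\end{equation}
over the level surfaces $\Sigma_{T}=\{K=T\}$, averaging in $T$, and inserting the already-proved radiation bound; the weight $(1+K)^{\delta-1}$ produces a factor $R^{-\delta}$ that lets one absorb the leftover $|||u|||_{1}^{2}$ for $R$ large. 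A secondary issue: your radiation multiplier uses $\D^{r}-i\lambda^{1/2}\partial_{r}K$, i.e.\ the direction $x/|x|$; for a nonspherical $K$ one must contract with the full $\nabla K$, since discarding $\nabla^{\bot}K$ destroys the cancellation coming from Lemma \ref{lema2.5}. Your limiting argument $\varepsilon\to0$ and the uniqueness sketch (Rellich-type decay plus unique continuation via \cite{R}) are consistent with the paper.
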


\begin{remark}
We point out that condition (\ref{gaugesaito}) is only needed for the unique continuation property, which is fundamental for proving the uniqueness result (see Theorem \ref{unicidad2} below). We use a unique continuation result proved by Regbaoui \cite{R}.
\end{remark}

Theorem \ref{d>3} is the analog to the result by Saito in \cite{S}, generalized to possibly singular potentials. Observe that in our approach it will be necessary to solve the eikonal equation (\ref{eiko2}). 

Barles \cite{B} proved that under the assumption (\ref{assuv1}) and for $C^{*}$ small enough, there exists a solution of the equation (\ref{eiko2}) for $|x|>R_{0}$ with $R_{0}$ large enough, see section \ref{eikonalequation}. In general, one can not expect that the vector $\nabla K$ points at the direction $\frac{x}{|x|}$. An illustrative example given by Saito \cite{S} is to consider
\begin{equation}\notag
\p(x)=-\frac{1}{\lambda}\frac{x_{1}}{|x|}.
\end{equation}
Then $\p(x)$ satisfies (\ref{assuv1}) for $\lambda$ large enough and
\begin{equation}\notag
K(x) = a(\lambda)|x| - b(\lambda)x_{1}
\end{equation}
with
\begin{equation}\notag
\left\{ \begin{array}{ll} 
a(\lambda)=\frac{1}{2}[(1+1/\lambda)^{1/2} + (1-1/\lambda)^{1/2}] \\ 
b(\lambda)=\frac{1}{2}[(1+1/\lambda)^{1/2} - (1-1/\lambda)^{1/2}] 
\end{array} \right.
\end{equation} 
is a solution of the eikonal equation (\ref{eiko2}). This boundary condition differs from ours in all points except when $\nabla n =0$ ($n=\lambda + \p$). In the trivial case $\p(x)=0$, one can take $K(x,\lambda) = |x|$.

Note that assumptions needed to obtain the energy estimate and those for the limiting absorption principle are different and not comparable. On the one hand, if $n=n_{\infty}$ and regular, (\ref{nbtau}) is trivially fulfilled. On the other hand, condition (\ref{assuv1}) with $n=\lambda(1+\p)$ does not imply the existence of the limit $n_{\infty}$. In addition, condition (\ref{barrixalim}) does not need any regularity assumption on $n(x)$ as in (\ref{assuv1}). It is easy to see that if besides (\ref{assuv1}), we require 
\begin{equation}\label{p1}
|\partial_{r}\p(x)| \leq c_{2}|x|^{-1-\mu}, \quad \quad \quad |x|\geq 1,
\end{equation}
for some $c_{2}>0$, $\mu >0$, then the index of refraction $n(x)$ admits a radial limit $n_{\infty}\left( \frac{x}{|x|}\right)$ as $|x|\to\infty$. Moreover, it follows that
\begin{equation}\notag
\left|n(r\omega) - n_{\infty}(\omega) \right| \leq \Gamma|x|^{-\mu}
\end{equation}
for $\Gamma >0$, where $r=|x|$ and $\omega=\frac{x}{|x|}$. See \cite{PV2} for more details.

A combination of Theorem \ref{Theoremenergy1} with Theorem \ref{d>3} will allow us to deduce an explicit Sommerfeld radiation condition. Let us consider the Helmholtz equation  
\begin{equation}\label{****}
(\nabla + ib)^{2} u + \lambda(1+ \p)u = f
\end{equation}
and we denote $n(x) = \lambda(1+ \p(x))$. The following result complements that of Saito \cite{S} when $Q=0$ and extends the one given in \cite{PV2} to the magnetic case. 

\begin{thm}\label{teoremaexplicit}
For dimensions $d\geq 3$, assume (\ref{nbtau}) and (\ref{assuv1}). Then for sufficiently small $C^{*}>0$ and for any $\lambda \in [\lambda_{0}, \lambda_{1}]$ with $0<\lambda_{0} < \lambda_{1}<\infty$, there exists a unique solution of the Helmholtz equation (\ref{****}) satisfying
\begin{equation}\label{nradi}
\int_{\Rd} \left|\D u - i n^{1/2}(x)\frac{x}{|x|}u \right|^{2}\frac{dx}{|x|} \leq C_{\delta}\int_{\Rd} (1+|x|)^{1+\delta} |f|^{2}dx,
\end{equation}
for some $\delta >0$. Moreover, if there exist $n_{\infty}$, $\Gamma>0$ and $\mu>0$ such that
\begin{equation}\notag
\left|n(x) - n_{\infty}\left(\frac{x}{|x|} \right)\right| \leq n(x)\frac{\Gamma}{|x|^{\mu}}\quad  \textrm{for} \quad |x| \quad \textrm{large enough},
\end{equation}
then it follows that
\begin{align}\label{explicit}
\int_{|x|\geq 1} \left|\D u - i n_{\infty}^{1/2}\frac{x}{|x|}u\right|^{2} \frac{dx}{|x|} \leq C\int_{\Rd} (1+|x|)^{1+\delta}|f|^{2}dx.
\end{align}
\end{thm}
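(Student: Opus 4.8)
The plan is to deduce everything from Theorem~\ref{d>3}, applied with $Q\equiv 0$ and with its parameter $\mu$ taken larger than the $\delta$ we want, together with the tangential a priori bound $\int_{\Rd}|\D^{\bot}u|^{2}/|x|\,dx<\infty$ that underlies the proof of Theorem~\ref{Theoremenergy1}. First I would verify that, for $n=\lambda(1+\p)$ and $Q=0$, the hypotheses (\ref{nbtau}) and (\ref{assuv1}) of the present statement (together with the standing conditions on $b$) imply those of Theorem~\ref{d>3}; this yields a unique limiting-absorption solution $u$ of (\ref{****}) obeying the Morrey--Campanato bound (\ref{i}) and the Saito-type radiation condition (\ref{radiacion}) with the eikonal phase $K$, $|\nabla K|^{2}=1+\p$, which by Barles exists for $|x|>R_{0}$ once $C^{*}$ is small. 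In parallel, running the integration-by-parts scheme behind Theorem~\ref{Theoremenergy1} on the absorbed equation (\ref{*eps}) with $n=\lambda(1+\p)$ gives, uniformly in $\varepsilon$ and hence in the limit,
\[
\int_{\Rd}\frac{|\D^{\bot}u|^{2}}{|x|}\,dx\ \le\ C\,\Big(N\big(\tfrac{f}{n^{1/2}}\big)\Big)^{2}\ \le\ C_{\delta}\int_{\Rd}(1+|x|)^{1+\delta}|f|^{2}\,dx ;
\]
this is where hypothesis (\ref{nbtau}) enters, and the last step is Cauchy--Schwarz in the definition of $N(\cdot)$ together with $n\simeq 1$.

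The core of (\ref{nradi}) is to replace the phase $\lambda^{1/2}\nabla K$ by $n^{1/2}\frac{x}{|x|}=\lambda^{1/2}(1+\p)^{1/2}\frac{x}{|x|}$, which has the \emph{same} length $n^{1/2}$ as $\lambda^{1/2}\nabla K$ by the eikonal identity. Decomposing $\D u$ and $\nabla K$ into radial and tangential parts and using $(\partial_{r}K)^{2}+|\nabla^{\bot}K|^{2}=1+\p$, one gets $\partial_{r}K-(1+\p)^{1/2}=-|\nabla^{\bot}K|^{2}\big(\partial_{r}K+(1+\p)^{1/2}\big)^{-1}$, so that for $|x|\ge R_{0}$ (where the denominator is $\ge c>0$ since $C^{*}$ is small) $\big|\lambda^{1/2}\nabla K-n^{1/2}\frac{x}{|x|}\big|^{2}=\lambda|\nabla^{\bot}K|^{2}+\lambda|\partial_{r}K-(1+\p)^{1/2}|^{2}\le C|\nabla^{\bot}K|^{2}$. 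Writing $\D u-in^{1/2}\frac{x}{|x|}u=(\D u-i\lambda^{1/2}\nabla K\,u)+i\big(\lambda^{1/2}\nabla K-n^{1/2}\frac{x}{|x|}\big)u$ and using $\frac{1}{|x|}\le C(1+|x|)^{\delta-1}$ for $|x|\ge 1$, the first term is dominated over $\{|x|\ge 1\}$ by (\ref{radiacion}), and it remains to bound $\int_{|x|\ge R_{0}}|\nabla^{\bot}K|^{2}|u|^{2}\frac{dx}{|x|}$. The key point is that one does \emph{not} control $|u|^{2}$ directly --- indeed $\int|u|^{2}/|x|$ already diverges for a free outgoing wave --- but, since $\lambda^{1/2}\nabla^{\bot}K$ is the tangential part of $\lambda^{1/2}\nabla K$, the triangle inequality gives $\lambda^{1/2}|\nabla^{\bot}K||u|\le|\D^{\bot}u|+|\D^{\bot}u-i\lambda^{1/2}\nabla^{\bot}K\,u|$, where the last summand is a component of $\D u-i\lambda^{1/2}\nabla K\,u$; hence
\[
\int_{|x|\ge R_{0}}|\nabla^{\bot}K|^{2}|u|^{2}\frac{dx}{|x|}\ \le\ C\int_{\Rd}\frac{|\D^{\bot}u|^{2}}{|x|}\,dx+C\int_{|x|\ge 1}\frac{|\D u-i\lambda^{1/2}\nabla K\,u|^{2}}{(1+|x|)^{1-\delta}}\,dx ,
\]
and both terms are $\le C_{\delta}\int_{\Rd}(1+|x|)^{1+\delta}|f|^{2}$ by the tangential estimate and by (\ref{radiacion}). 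The region $\{|x|\le R_{0}\}$ is absorbed using (\ref{i}), the local regularity of $u$, and the local integrability of $|x|^{-1}$ in $d\ge 3$; this gives (\ref{nradi}). (When $n_{\infty}$ does exist one could instead bound $|\nabla^{\bot}K|\le C|\nabla_{\omega}n_{\infty}|$ and invoke Theorem~\ref{Theoremenergy1} directly, but the argument above needs no limit.)

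For the last assertion, write $\D u-in_{\infty}^{1/2}\frac{x}{|x|}u=(\D u-in^{1/2}\frac{x}{|x|}u)+i\big(n^{1/2}-n_{\infty}^{1/2}\big)\frac{x}{|x|}u$; the first term is (\ref{nradi}). For the second, $|n^{1/2}-n_{\infty}^{1/2}|=|n-n_{\infty}|\big(n^{1/2}+n_{\infty}^{1/2}\big)^{-1}\le C|x|^{-\mu}$ for $|x|$ large by the hypothesis $|n-n_{\infty}|\le n\Gamma|x|^{-\mu}$, so
\[
\int_{|x|\ge 1}\big|n^{1/2}-n_{\infty}^{1/2}\big|^{2}|u|^{2}\frac{dx}{|x|}\ \le\ C\sum_{j\ge 0}2^{-2j\mu}\Big(2^{-j}\!\!\int_{C(j)}|u|^{2}\Big)\ \le\ C\,|||u|||_{1}^{2}\ \le\ C\,(N_{1}(f))^{2},
\]
again $\le C_{\delta}\int_{\Rd}(1+|x|)^{1+\delta}|f|^{2}$. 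The main obstacles I anticipate are: the bookkeeping that (\ref{nbtau})--(\ref{assuv1}) with $Q=0$ really fit into Theorem~\ref{d>3}; the behaviour of the eikonal solution near the shell $|x|=R_{0}$ and of $u$ near the origin; and, conceptually, the observation that the angular defect $|\nabla^{\bot}K|^{2}|u|^{2}$ must be estimated through $\D^{\bot}u$ rather than through $u$ itself, which is precisely why hypothesis (\ref{nbtau}) --- the ingredient shared with Theorem~\ref{Theoremenergy1} --- is indispensable.
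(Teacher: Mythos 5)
Your proposal is correct and follows essentially the same route as the paper: combine the Saito-type radiation condition (\ref{radiacion}) from Theorem \ref{d>3} with the tangential estimate $\int|\D^{\bot}u|^{2}/|x|$ from Theorem \ref{Theorem1.1}, use the eikonal identity to reduce $|\lambda^{1/2}\partial_{r}K-n^{1/2}|$ to $|\lambda^{1/2}\nabla^{\bot}K|^{2}$, control $\int\lambda|\nabla^{\bot}K|^{2}|u|^{2}/|x|$ through $\D^{\bot}u$ and the tangential part of the Sommerfeld defect, and then pass from $n^{1/2}$ to $n_{\infty}^{1/2}$ via the Morrey--Campanato bound on $u$. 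The only cosmetic difference is that the paper splits $|\D u-in^{1/2}\tfrac{x}{|x|}u|^{2}$ directly into radial, phase-correction and tangential pieces rather than adding and subtracting $i\lambda^{1/2}\nabla K\,u$, which is an equivalent bookkeeping.
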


Note that the spherical term
\begin{equation}\notag
n_{\infty}^{1/2}(\omega)\frac{x}{|x|}
\end{equation}
appears in this formula instead of the phase as in (\ref{radiacion}), where $\nabla K$ is the outward normal of the surface $|K(x, \lambda)|=\lambda$, which is not necessarily a sphere. This apparent contradiction can be explained by the extra estimate (\ref{enerthm}) on the energy decay which applies for the above example. In fact, it can be interpreted as a concentration of the energy along the directions given by the critical points of $n_{\infty}$. In other words, the Sommerfeld condition hides the main physical effect arising for a variable $n$ at infinity; the energy concentration on lines rather than dispersion in all directions.

The rest of the paper is organized as follows. In the next section, we give a brief exposition of the eikonal equation and its properties that will be useful for the proofs of the main results. Section \ref{newenergy} will be concerned with the new energy estimate. We will prove Theorem \ref{Theoremenergy1} showing first the appropriate a-priori estimates given in Theorem \ref{Theorem1.1} that permits to deduce the desired conclusion. In section \ref{LAPenergy} we proceed with the study of the limiting absorption principle for the equation (\ref{**s}). We conclude the proof of Theorem \ref{d>3}, following the ideas in \cite{Z}. Section \ref{sectionexplicit} provides a detailed proof of the new explicit Sommerfeld condition given in Theorem \ref{teoremaexplicit}. The fundamental tool of the proofs are Morawetz-type abstract identities based on integration by parts, which are established in Appendix (see Lemmas \ref{appendix1} and \ref{appendix2}).

\subsection*{Notation}
Throughout the paper, $C$ denotes an arbitrary positive constant and $\kappa$ stands for a small positive constant. In most of the cases, $\kappa$ will come from the inequality $ab \leq \kappa a^{2}+ \frac{1}{4\kappa}b^{2}$, which is true for arbitrary $\kappa >0$. In the integrals where we do not specify the integration space we mean that we are integrating in the whole $\Rd$ with respect to the Lebesgue measure $dx$, i.e. $\int = \int_{\Rd} dx$.

\section{The Eikonal Equation}\label{eikonalequation}

In order to determine the phase arising in the Sommerfeld radiation condition (\ref{radiacion}) and to conclude the explicit one (\ref{explicit}), we need to solve the eikonal equation
\begin{equation}\label{eikonal}
|\nabla K|^{2} = 1+\p(x), \quad x\in\Rd.
\end{equation}

Setting 
\begin{equation}\label{g}
g(x,C^{*})=|x|^{-1}K(x,C^{*}),
\end{equation}
we derive the following Hamilton-Jacobi equation 
\begin{equation}\label{HJ}
|g|^{2} + 2r\partial_{r}g + |x|^{2}|\nabla g|^{2} = 1+\p(x), \quad x\in \Rd\backslash\{0\}.
\end{equation}
From (\ref{HJ}), under assumption $(\ref{assuv1})$, Barles showed in \cite{B} that there exists $C_{0} > 0$ such that for any $C^{*} <  C_{0}$ the differential equation (\ref{eikonal}) has a solution $\varphi=\varphi(x,C^{*})$ for $|x|\geq r_{0}$ which satisfies
\begin{itemize}

\item[(i)] $\varphi(x,C^{*})$ is a real-valued $C^{3}$ function for $|x| \geq r_{0}$.

\item[(ii)] For any $|x|\geq r_{0}$ and $C^{*}<C_{0}$,
\begin{equation}\label{g}
c_{0} \leq g(x,C^{*}) \leq c_{1}
\end{equation}
\noindent
with positive constants $c_{0}$ and $c_{1}$.

\item[(iii)] When $C^{*} \to 0$,

\begin{equation}\label{propietateg}
|x|^{j}(\partial^{j}g)(x,C^{*}) \longrightarrow \left\{ \begin{array}{ll}
1, & \quad j=0\\
0, & \quad j=1,2,3
\end{array}\right.
\end{equation}
uniformly for $x \in \{x\in\Rd : |x| \geq r_{0}\}$. 

\end{itemize}

Therefore, one can easily deduce the following identity that will be very useful in section \ref{LAPenergy}.

\begin{lem}\label{lema2.5}(\cite{S}, Lemma 2.5)
For the solution $K$ of the eikonal equation (\ref{eikonal}) and for $1\leq i,j\leq d$, the following identity holds 
\begin{equation}\label{segunda}
\frac{\partial^{2} K}{\partial x_{i}\partial x_{j}} = \frac{|\nabla K|^{2}}{K}\delta_{ij} -\frac{1}{K}\frac{\partial K}{\partial x_{i}}\frac{\partial K}{\partial x_{j}} +\frac{1}{K}F_{ij}(x,C^{*}),
\end{equation}
where $F_{ij}(x,C^{*})$ is a bounded function of $x$ for $|x|\geq r_{0}$ such that
\begin{equation}\label{0.2}
\lim_{C^{*} \to 0} \sup_{|x|\geq r_{0}} |F_{ij}(x,C^{*})| = 0 \quad (i,j=1,\ldots,d)
\end{equation}
\noindent
and
\begin{displaymath}
\delta_{ij} = \left\{ \begin{array}{ll}
1 & \quad i=j,\\
0 & \quad i \not= j.
\end{array} \right.
\end{displaymath}
\end{lem}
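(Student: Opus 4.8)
The plan is to establish \eqref{segunda} by a direct differentiation, exploiting the factorization $K(x,C^{*}) = |x|\,g(x,C^{*})$ together with the two equations available for $K$ and $g$: the eikonal equation \eqref{eikonal} and the Hamilton--Jacobi equation \eqref{HJ}. Writing $r = |x|$ and $\omega_{k} = x_{k}/|x|$, I would first compute, by the product and chain rules,
\[
\partial_{i} K = \omega_{i} g + r\,\partial_{i} g, \qquad
\partial_{i}\partial_{j} K = \frac{g}{r}\bigl(\delta_{ij} - \omega_{i}\omega_{j}\bigr) + \omega_{i}\partial_{j} g + \omega_{j}\partial_{i} g + r\,\partial_{i}\partial_{j} g .
\]
On the right-hand side of \eqref{segunda}, the eikonal equation gives $|\nabla K|^{2} = 1+\p$, so that $\tfrac{|\nabla K|^{2}}{K}\delta_{ij} = \tfrac{1+\p}{rg}\delta_{ij}$; inserting $1+\p = g^{2} + 2r\,\partial_{r}g + r^{2}|\nabla g|^{2}$ from \eqref{HJ} turns this into $\bigl(\tfrac{g}{r} + \tfrac{2\partial_{r}g}{g} + \tfrac{r|\nabla g|^{2}}{g}\bigr)\delta_{ij}$. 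Expanding $-\tfrac{1}{K}\partial_{i}K\,\partial_{j}K = -\tfrac{1}{rg}\bigl(\omega_{i} g + r\partial_{i} g\bigr)\bigl(\omega_{j} g + r\partial_{j} g\bigr)$ produces $-\tfrac{g}{r}\omega_{i}\omega_{j} - \bigl(\omega_{i}\partial_{j} g + \omega_{j}\partial_{i} g\bigr) - \tfrac{r}{g}\partial_{i} g\,\partial_{j} g$.

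Next I would subtract these expressions. The key cancellation is that the terms $\tfrac{g}{r}\delta_{ij}$ and $\tfrac{g}{r}\omega_{i}\omega_{j}$ arising from differentiating $\omega_{i}$ twice are matched exactly by the coefficient of $\delta_{ij}$ produced through the Hamilton--Jacobi relation and by the term $-\tfrac{g}{r}\omega_{i}\omega_{j}$ from $-\tfrac{1}{K}\partial_{i}K\,\partial_{j}K$; this is precisely the place where the eikonal/Hamilton--Jacobi structure is essential and is the heart of the identity. After this cancellation one is left with
\[
F_{ij}(x,C^{*}) := rg\,\partial_{i}\partial_{j} K - |\nabla K|^{2}\delta_{ij} + \partial_{i}K\,\partial_{j}K ,
\]
which, by the above computation, is a finite sum of terms each of which is a product of $g$ and one of the quantities $r\,\partial_{k}g$, $r^{2}\,\partial_{k}\partial_{\ell}g$, $(r\partial_{k}g)(r\partial_{\ell}g)$, $r\,\partial_{r}g$ or $r^{2}|\nabla g|^{2}$. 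By property (ii) the factor $g$ stays between $c_{0}$ and $c_{1}$ on $\{|x|\ge r_{0}\}$, while by property (iii) each of the remaining factors is bounded on $\{|x|\ge r_{0}\}$ and tends to $0$ uniformly there as $C^{*}\to 0$. Hence $F_{ij}$ is bounded for $|x|\ge r_{0}$ and satisfies \eqref{0.2}, which completes the argument.

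The computation itself is entirely routine; the only delicate point is the bookkeeping of the mixed terms $\omega_{i}\partial_{j}g$ and $\omega_{j}\partial_{i}g$, so that the coefficient of $\delta_{ij}$ on the right-hand side, once the Hamilton--Jacobi relation is substituted, correctly absorbs the $\tfrac{g}{r}$ coming from the double differentiation of $x_{i}/|x|$. Since the statement is exactly Lemma~2.5 of \cite{S}, one may alternatively simply invoke that reference; the short self-contained derivation sketched above is recorded for completeness and for the convenience of the reader in Section~\ref{LAPenergy}.
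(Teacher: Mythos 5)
Your proposal is correct and follows essentially the same route as the paper: both factor $K=|x|\,g$, compute the first and second derivatives explicitly, use the expansion of $|\nabla K|^{2}$ in terms of $g$ to trade $g^{2}\delta_{ij}$ and $g^{2}\omega_{i}\omega_{j}$ for $|\nabla K|^{2}\delta_{ij}$ and $\partial_{i}K\,\partial_{j}K$, and then read off $F_{ij}$ as a sum of terms controlled by Barles's properties (ii)--(iii). The only cosmetic point is that the correct expansion is $|\nabla K|^{2}=g^{2}+2rg\,\partial_{r}g+r^{2}|\nabla g|^{2}$ (with the factor $g$ in the middle term, as used in the paper's proof), which changes nothing in your boundedness and vanishing conclusions.
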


\begin{proof}
Setting $K(x,C^{*})=|x|g(x,C^{*})$ and $\tilde{x}_{k}=\frac{x_{k}}{|x|}$, we have
\begin{equation}
\label{2.35}
\frac{\partial K}{\partial x_{i}} = \tilde{x}_{i}g + |x|\frac{\partial g}{\partial x_{i}}.
\end{equation}
Then,
\begin{equation}\notag
\frac{\partial^{2} K}{\partial x_{i}\partial x_{j}} = \frac{\delta_{ij}g}{|x|} -\tilde{x}_{i}\tilde{x}_{j}\frac{g}{|x|} + \tilde{x}_{i}\frac{\partial g}{\partial x_{j}} +\tilde{x}_{j}\frac{\partial g}{\partial x_{i}} +|x|\frac{\partial^{2}g}{\partial x_{i}\partial x_{j}},
\end{equation}
which can be written as
\begin{equation}\label{0.4}
\frac{\partial^{2}K}{\partial x_{i}\partial x_{j}} = \frac{\delta_{ij}}{K}g^{2} - \frac{\tilde{x}_{i}\tilde{x}_{j}}{K}g^{2} + \frac{1}{K}G_{ij}(x,C^{*}),
\end{equation}
with
\begin{equation}\notag
G_{ij}(x,C^{*}) =\left( \tilde{x}_{i}|x|\frac{\partial g}{\partial x_{j}}+ \tilde{x}_{j}|x|\frac{\partial g}{\partial x_{i}} + |x|^{2}\frac{\partial^{2}g}{\partial x_{i}\partial x_{j}}\right).
\end{equation}
On the other hand, from (\ref{2.35}) it follows that
\begin{displaymath}
\left\{ \begin{array}{ll}
\tilde{x}_{i}g=\frac{\partial K}{\partial x_{i}} - |x|\frac{\partial g}{\partial x_{i}},\vspace{0.2cm}\\
|\nabla K|^{2}=g^{2} +2g|x|\tilde{x}\cdot\nabla g +|x|^{2}|\nabla g|^{2}.
\end{array} \right.
\end{displaymath}
Thus we obtain
\begin{equation*}
\frac{\tilde{x}_{i} \tilde{x}_{j}}{K} g^{2}= \frac{1}{K} \frac{\partial K}{\partial x_{i}}\frac{\partial K}{\partial x_{j}} -
\frac{1}{K} \left( |x|\frac{\partial g}{\partial x_{i}}\frac{\partial K}{\partial x_{j}} 
+ |x|\frac{\partial K}{\partial x_{i}}\frac{\partial g}{\partial x_{j}} - |x|^{2}\frac{\partial g}{\partial x_{i}}\frac{\partial g}{\partial x_{j}}\right),\vspace{0.2cm}
\end{equation*}
which together with (\ref{0.4}), gives (\ref{segunda}) with
\begin{align}\label{F}
F_{ij}&= G_{ij}-\delta_{ij}(2|x| \tilde{x}\cdot\nabla g) + |x|^{2}|\nabla g|^{2}) + |x|\frac{\partial g}{\partial x_{i}}\frac{\partial K}{\partial x_{j}}\\
& +|x|\frac{\partial K}{\partial x_{i}}\frac{\partial g}{\partial x_{j}} - |x|^{2}\frac{\partial g}{\partial x_{i}}\frac{\partial g}{\partial x_{j}}.\notag
\end{align}
The relation (\ref{0.2}) follows from (\ref{propietateg}) and the lemma is proved.

\end{proof}

In order to prove the explicit condition (\ref{explicit}) we shall deduce (see section \ref{sectionexplicit}) the estimate
\begin{equation}\label{tangeneikonal}
\int |\nabla^{\bot} K u|^{2}\frac{1}{1+|x|} < +\infty.
\end{equation}
This is an energy estimate in itself which says that $u$ concentrates along the critical points of $\nabla{\bot}K$. In fact, from the hypotheses (\ref{p1}) for the potential $\p(x)$, it follows that these critical point coincide with those of $\nabla_{\omega}n_{\infty}$ establishing a relation between the energy estimate (\ref{enerthm}) and (\ref{tangeneikonal}). 

\begin{lem}(\cite{PV2}, Theorem 3.2) Under assumptions (\ref{assuv1}) and (\ref{p1}), the solution to (\ref{HJ}) satisfies for $C^{*}$ small enough and $x\neq 0$
\begin{equation}
|\partial_{r}g| \leq C^{*}r^{-1-\mu},
\end{equation}
and $g\left(r\frac{x}{|x|}\right) \to g_{\infty}\left(\frac{x}{|x|}\right)$ as $r\to \infty$, a smooth solution to the equation
\begin{equation}
g_{\infty}(\omega)^{2} + |\nabla_{\omega}g_{\infty}(\omega)|^{2} = n_{\infty}(\omega), \quad \omega\in S^{d-1}.
\end{equation}
Moreover,
\begin{equation}
|\nabla^{\bot}K|=|\nabla_{\omega}g_{\infty}(\omega)| + O(r^{-\mu})
\end{equation}
and
\begin{equation}
0< C_{1}|\nabla_{\omega}g_{\infty}| \leq |\nabla_{\omega}n_{\infty}| \leq C_{2}|\nabla_{\omega}g_{\infty}|.
\end{equation}
\end{lem}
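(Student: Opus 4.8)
The plan is to read the Hamilton--Jacobi equation (\ref{HJ}) along each ray $\omega=x/|x|$ as an ODE in $r$, first for $\partial_{r}g$ and then for $g$ itself, and to pass to the limit $r\to\infty$ using Barles' estimates (i)--(iii) and the decay (\ref{p1}). Concretely I would apply $\partial_{r}=\tfrac{x}{|x|}\cdot\nabla$ to (\ref{HJ}): using $r^{2}|\nabla g|^{2}=(r\partial_{r}g)^{2}+|\nabla_{\omega}g|^{2}$ and writing $\psi=\partial_{r}g$, one gets along a fixed ray the transport identity
\begin{equation}\notag
2r\bigl(1+r\psi\bigr)\partial_{r}\psi+2\bigl(1+g+r\psi\bigr)\psi=\partial_{r}\p-\partial_{r}\bigl(|\nabla_{\omega}g|^{2}\bigr).
\end{equation}
By (\ref{propietateg}) the quantities $|x||\partial^{1}g|$ and $|\nabla_{\omega}g|$ are as small as desired once $C^{*}$ is small, while $g\ge c_{0}>0$; hence, dividing by $2r(1+r\psi)$, the coefficient of $\psi$ has the form $a(r)/r$ with $a(r)\ge 1+\varepsilon_{0}$ for a fixed $\varepsilon_{0}>0$, and the forcing is $O(r^{-2-\mu})$ — the term $\partial_{r}\p$ is controlled by (\ref{p1}), and the tangential contribution $\partial_{r}(|\nabla_{\omega}g|^{2})$ is a lower--order error absorbed by a bootstrap started from the a priori smallness $|x||\partial^{1}g|\le\varepsilon(C^{*})\to 0$. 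Integrating the resulting linear ODE from $r_{0}$ with its integrating factor (which grows faster than $r$) then gives $|\partial_{r}g|\le C^{*}r^{-1-\mu}$; producing the \emph{sharp} power here, rather than mere $r^{-1}$-smallness or integrability, is the step I expect to be the main obstacle.

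\textbf{The limit and its equation.} Since $r^{-1-\mu}$ is integrable at infinity, $\partial_{r}g\in L^{1}_{r}(r_{0},\infty)$, so $g(r\omega)$ converges to some $g_{\infty}(\omega)$ with $|g(r\omega)-g_{\infty}(\omega)|\le C^{*}r^{-\mu}$. By (\ref{propietateg}) the scaled derivatives $|x|^{j}\partial^{j}g$ are bounded uniformly in $r$ for $j\le 3$, so $\{g(r\,\cdot)\}_{r}$ is precompact in $C^{2}(S^{d-1})$; with the pointwise convergence this upgrades to $g(r\,\cdot)\to g_{\infty}$ in $C^{2}(S^{d-1})$, whence $g_{\infty}\in C^{2}$ and $\nabla_{\omega}g(r\,\cdot)\to\nabla_{\omega}g_{\infty}$. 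Passing to the limit in (\ref{HJ}) — where $2r\partial_{r}g=O(r^{-\mu})\to 0$, $(r\partial_{r}g)^{2}=O(r^{-2\mu})\to 0$, $g^{2}\to g_{\infty}^{2}$, $|\nabla_{\omega}g|^{2}\to|\nabla_{\omega}g_{\infty}|^{2}$ and $1+\p(r\omega)\to n_{\infty}(\omega)$ — yields $g_{\infty}^{2}+|\nabla_{\omega}g_{\infty}|^{2}=n_{\infty}$. Smoothness of $g_{\infty}$ is then bootstrapped from this elliptic equation on the sphere together with the $C^{\infty}$ regularity of $n_{\infty}$ in (\ref{1.12}).

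\textbf{The two remaining identities.} From $K=|x|g$ one computes $\nabla K=(g+r\partial_{r}g)\tfrac{x}{|x|}+r\nabla^{\bot}g$, hence $\nabla^{\bot}K=r\nabla^{\bot}g$ and $|\nabla^{\bot}K|=|\nabla_{\omega}g|$; combined with the rate $|\nabla_{\omega}g(r\,\cdot)-\nabla_{\omega}g_{\infty}|=O(r^{-\mu})$ — obtained by differentiating the estimate of the first step tangentially — this gives $|\nabla^{\bot}K|=|\nabla_{\omega}g_{\infty}|+O(r^{-\mu})$. Finally, applying $\nabla_{\omega}$ to $g_{\infty}^{2}+|\nabla_{\omega}g_{\infty}|^{2}=n_{\infty}$ gives $\nabla_{\omega}n_{\infty}=2\bigl(g_{\infty}\,\mathrm{Id}+\nabla^{2}_{\omega}g_{\infty}\bigr)\nabla_{\omega}g_{\infty}$, with $\nabla^{2}_{\omega}$ the Hessian on $S^{d-1}$. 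The bound $|\nabla_{\omega}n_{\infty}|\le C_{2}|\nabla_{\omega}g_{\infty}|$ is immediate from the uniform $C^{2}$ bound on $g_{\infty}$ and $g_{\infty}\le c_{1}$; for the reverse bound, (\ref{propietateg}) makes $\nabla^{2}_{\omega}g_{\infty}$ small for $C^{*}$ small, so $g_{\infty}\,\mathrm{Id}+\nabla^{2}_{\omega}g_{\infty}\ge\tfrac{c_{0}}{2}\,\mathrm{Id}$ is invertible with uniformly bounded inverse, giving $|\nabla_{\omega}g_{\infty}|\le C_{1}^{-1}|\nabla_{\omega}n_{\infty}|$ and, in particular, that $\nabla_{\omega}g_{\infty}$ and $\nabla_{\omega}n_{\infty}$ vanish at exactly the same points of $S^{d-1}$.
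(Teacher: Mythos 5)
First, a structural remark: the paper does not prove this lemma at all — it is quoted from \cite{PV2} (Theorem 3.2) and used as a black box — so your proposal is being compared against a citation rather than an argument. On its own terms, your outline has the right skeleton (an ODE along rays for $\psi=\partial_{r}g$, integrability giving $g_{\infty}$, passage to the limit in (\ref{HJ}), differentiation of the limiting eikonal equation for the two-sided comparison), and the final block — $\nabla^{\bot}K=r\nabla^{\bot}g$, and the constants $C_{1},C_{2}$ via invertibility of $g_{\infty}\,\mathrm{Id}+\nabla^{2}_{\omega}g_{\infty}$ for $C^{*}$ small — is correct. But the step you flag as the obstacle is a genuine gap, not a technicality. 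After dividing your transport identity by $2r(1+r\psi)$, the forcing is $\frac{\partial_{r}\p}{2r(\cdots)}-\frac{\nabla_{\omega}g\cdot\nabla_{\omega}\psi}{r(\cdots)}$. The first piece is $O(r^{-2-\mu})$ by (\ref{p1}), but the a priori information (\ref{propietateg}) gives only $|\nabla_{\omega}g|\leq\varepsilon(C^{*})$ and $|\nabla_{\omega}\psi|=|\nabla_{\omega}\partial_{r}g|\leq\varepsilon(C^{*})r^{-1}$, so the second piece is merely $O(\varepsilon^{2}r^{-2})$; feeding that into your linear ODE returns $|\psi|\lesssim\varepsilon^{2}r^{-1}$, not $r^{-1-\mu}$. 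A bootstrap on $\psi$ alone cannot absorb this term, because the term involves the \emph{tangential derivative} of $\psi$, not $\psi$; and if you try to close a hierarchy by differentiating the transport equation in $\omega$, the new forcing $\nabla_{\omega}\partial_{r}\p$ is only $O(C^{*}r^{-1})$ under (\ref{assuv1}) — hypothesis (\ref{p1}) provides the $\mu$-gain for the pure radial derivative of $\p$ only. So the scalar Gronwall along each ray does not yield the sharp power as described; one needs a different mechanism (e.g.\ exploiting $\partial_{r}(rg)=\sqrt{\,1+\p-|\nabla_{\omega}g|^{2}\,}$ and the convergence of the resulting averages, as in \cite{PV2}). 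The same unproved mixed-derivative decay is what you invoke again to get the rate $O(r^{-\mu})$ in $|\nabla^{\bot}K|=|\nabla_{\omega}g_{\infty}|+O(r^{-\mu})$ "by differentiating the estimate of the first step tangentially".

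A second, smaller error: the limiting equation $g_{\infty}^{2}+|\nabla_{\omega}g_{\infty}|^{2}=n_{\infty}$ is a first-order eikonal equation on $S^{d-1}$, not an elliptic one, so smoothness of $g_{\infty}$ cannot be "bootstrapped" from it (solutions of such equations are generically only Lipschitz). The regularity of $g_{\infty}$ has to come from uniform bounds on higher derivatives of $g$ supplied by Barles' construction for $C^{*}$ small, passed to the limit — which is also where the $C^{2}(S^{d-1})$ compactness you invoke should be justified. Also, be aware that you took (\ref{HJ}) literally; consistency with $K=|x|g$ and $|\nabla K|^{2}=1+\p$ requires the cross term $2rg\partial_{r}g$ rather than $2r\partial_{r}g$, which changes the coefficients in your transport identity (harmlessly, since in either case the coefficient $a(r)$ stays bounded away from $1$).
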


Observe however that for the energy estimate (\ref{enerthm}) we do not need the existence of a solution to the eikonal equation (\ref{eikonal}) which could well not exist.

\section{The energy estimate. Proof of Theorem \ref{Theoremenergy1}}\label{newenergy}

The purpose of this section is to extend the result by Perthame and Vega \cite{PV2} to the magnetic case. To be more precise, we are interested in proving the energy estimate (\ref{enerthm}) given in Theorem \ref{Theoremenergy1} for solutions $u\in H^{1}_{A}(\Rd)$ of the magnetic Helmholtz equation
\begin{equation}\label{**hauda}
\D^{2} u +  n(x)u + i\varepsilon u = f(x), \quad \quad \varepsilon >0.
\end{equation}
This estimate uses in a strong way the a-priori estimate for the Morrey-Campanato norm of the solution $u$ of the equation (\ref{**hauda}) as well as the estimate for the tangential part of its magnetic gradient.

\vspace{0.2cm}
Let us consider $n(x) >0$ such that
\begin{align}
& n= n_{1} + n_{2} \quad \quad \textrm{with} \quad \quad n_{2}\in L^{\infty},\label{1.7}\\
& \Vert n_{1}^{1/2}u\Vert_{L^{2}} \leq (1-c_{0})\Vert \nabla u \Vert_{L^{2}} \quad \textrm{for some $c_{0}>0$},\label{1.8}\\
& 2\sum_{j\in\mathbb{Z}} \sup_{C(j)} \frac{(x\cdot \nabla n(x))_{-} + 2^{2j}|B_{\tau}|^{2}}{n(x)} :=\beta < 1, \label{1.9}
\end{align}
where $C(j) = \{x\in \Rd : 2^{j-1}\leq |x| \leq 2^{j}\}$ and $(a)_{-}$ denotes the negative part of $a\in \mathbb{R}$.

Then it follows the following result.

\begin{thm}\label{Theorem1.1}
Let $d\geq 3$ and assume that (\ref{1.7})-(\ref{1.9}) hold. Then the solution to the Helmholtz equation (\ref{**hauda}) satisfies
\begin{align}\label{1.10}
M^{2} := |||\D u|||^{2} &+ ||| n^{1/2}u|||^{2} + \int \frac{|\D^{\bot}u|^{2}}{|x|}\\
& \leq C(\varepsilon + \Vert n_{2}\Vert_{L^{\infty}}) \left(N\left(\frac{f}{n^{1/2}} \right) \right)^{2},\notag
\end{align}
where $C$ is independet of $\varepsilon$ and $n$.
\end{thm}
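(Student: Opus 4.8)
The plan is to derive the a-priori estimate \eqref{1.10} from a Morawetz-type identity obtained by testing equation \eqref{**hauda} against a suitable multiplier and integrating by parts. The natural multiplier here is of the form $\alpha(r)\,\D^{r}u + \frac12(\alpha'(r) + \tfrac{d-1}{r}\alpha(r))\,u$, with a radial weight $\alpha(r)$ chosen (as in \cite{PV2}, \cite{F}) to behave like $|x|$ for small $|x|$ and to be bounded (essentially constant) for large $|x|$; this is precisely the weight that produces the Morrey--Campanato norms on the left-hand side. I would invoke the abstract integration-by-parts identity from the Appendix (Lemmas \ref{appendix1} and \ref{appendix2}) to get an exact identity whose left side collects, with positive coefficients, the terms
$$\int \alpha' |\D^{r}u|^{2} + \int \frac{\alpha}{r}|\D^{\bot}u|^{2} - \frac12\int(\Delta\alpha')|u|^{2} - \frac12\int \alpha' (x\cdot\nabla n)\frac{|u|^{2}}{r}\,\text{-type terms},$$
plus the magnetic contribution $\int \alpha\, \mathrm{Im}(\bar u\, B_\tau\cdot\D u)$ coming from the commutator of $\D$ with the radial derivative (this is where $B_\tau$ enters), and the right side is $\mathrm{Re}\int f\,\overline{(\text{multiplier})}$ together with the absorption term $\varepsilon\,\mathrm{Im}\int u\,\overline{(\text{multiplier})}$.

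The key steps, in order: (1) write down the Morawetz identity with the chosen $\alpha$, isolating the good positive terms $|||\D u|||^2$, $|||n^{1/2}u|||^2$, and $\int |\D^{\bot}u|^2/|x|$; (2) control the dangerous terms — the negative part of $x\cdot\nabla n$ and the tangential magnetic field term — by absorbing them into the good terms using hypothesis \eqref{1.9}, which is exactly calibrated (sum $<1$) so that the coefficient $\beta$ leaves a strictly positive remainder; here one decomposes dyadically over the annuli $C(j)$ so that on each annulus $2^{2j}|B_\tau|^2$ and $(x\cdot\nabla n)_-$ are compared against $n$ via the supremum in \eqref{1.9}; (3) handle the zeroth-order term $\int(\Delta\alpha')|u|^2$ using the repulsivity/subordination condition \eqref{1.8} together with a Hardy-type inequality (valid for $d\geq 3$), which is where the dimensional restriction is used, to dominate it by $(1-c_0)\|\nabla u\|^2$ and reabsorb; (4) estimate the forcing term by the duality $|\int fg|\leq |||f|||\,N(g)$, writing $f = n^{1/2}\cdot(f/n^{1/2})$ and bounding the multiplier's Morrey--Campanato norm by $M$, which yields the factor $N(f/n^{1/2})$ on the right; (5) bound the absorption term by $\varepsilon$ times $M^2$-type quantities, and treat the $n_2\in L^\infty$ part of $n$ as a bounded perturbation contributing the $\|n_2\|_{L^\infty}$ factor; (6) finally reabsorb all small terms into the left side to close the estimate, using Cauchy--Schwarz with the $\kappa$-trick $ab\leq\kappa a^2 + \frac1{4\kappa}b^2$.

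The main obstacle I expect is step (2)–(3): making the absorption genuinely work requires that after the dyadic summation the constant $\beta<1$ from \eqref{1.9} combines with the Hardy-inequality constant and the constant $c_0$ from \eqref{1.8} to leave a \emph{uniformly} positive coefficient, and this must be done in a way that is independent of $\varepsilon$ and of $n$ (including near the origin, where $\alpha(r)\sim r$ and the magnetic potential may be singular) — so careful bookkeeping of all constants, and a robust choice of $\alpha$ that is monotone with controlled $\alpha'$ and $\Delta\alpha'$, is essential. A secondary technical point is justifying the integrations by parts for $u\in H^1_A(\Rd)$ with merely $L^2_{loc}$ magnetic potential; I would handle this by a density/cutoff argument, first proving the identity for smooth compactly supported approximations and then passing to the limit, as in \cite{Z}, \cite{FV}.
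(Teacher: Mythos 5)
Your proposal follows essentially the same route as the paper's proof: the same pair of Morawetz multipliers (a radial weight behaving like $|x|/R$ inside the ball of radius $R$ and like $x/|x|$ outside, together with the zeroth-order multiplier $\tfrac{1}{2R}$ supported in $\{|x|\leq R\}$), the same dyadic absorption of $(x\cdot\nabla n)_{-}$ and $2^{2j}|B_{\tau}|^{2}$ over the annuli $C(j)$ via (\ref{1.9}), the Agmon--H\"ormander duality for the source term, and the a-priori identity for the $\varepsilon$-term, followed by taking the supremum over $R$. The only cosmetic deviation is that the paper needs no Hardy inequality for the zeroth-order term --- with this choice of multipliers $\Delta(2\varphi-\Delta\psi)\leq 0$ for $d\geq 3$, yielding a positive surface term $\tfrac{d-1}{8R^{2}}\int_{|x|=R}|u|^{2}$ --- and hypothesis (\ref{1.8}) enters only in the treatment of the $\varepsilon$-term, where it converts $\int |\D u|^{2}\leq \int n|u|^{2}+\int |f||u|$ into a bound by $\int n_{2}|u|^{2}+\int|f||u|$, rather than in controlling the bilaplacian term as you suggest.
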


\begin{proof}
The proof is based on the identities which are stablished in Appendix and follows the same arguments of the proofs of Theorem 2.1 in \cite{Z} or Theorem 1.1 in \cite{PV2}. Thus we give only the main ideas of the proof.

Let $R>0$ and we consider the functions $\psi$ and $\varphi$ given by
\begin{equation}\notag
\nabla\psi(x)  = \left\{ \begin{array}{ll}
\frac{|x|}{R} & \textrm{if $|x| \leq R$},\vspace{0.1cm} \\
\frac{x}{|x|} & \textrm{if $|x| \geq R$},
\end{array} \right.
\end{equation}
\begin{equation}\notag
\varphi(x) = \left\{ \begin{array}{ll}
\frac{1}{2R} & \textrm{if $|x| \leq R$},\\
0 & \textrm{if $|x| \geq R$}.
\end{array} \right.
\end{equation}

Let us add the identity (\ref{(4.3)}) to (\ref{(4.11)}) with the above choices of the multipliers, respectively. Then, analysis similar to that in the proof of Theorem 2.1 in \cite{Z} gives
\begin{align}\label{idergy}
\frac{1}{2R}&\int_{|x|\leq R} (|\D u|^{2} + n(x)|u|^{2}) + \int_{|x|\geq R} \frac{|\D^{\bot}u|^{2}}{|x|} + \frac{(d-1)}{8R^{2}}\int_{|x|=R} |u|^{2}\\
& \leq \frac{1}{2}\int (\partial_{r}n)_{-} |u|^{2} + \frac{1}{R}\int_{|x|\leq R} |x||B_{\tau}||\D u||u| +\int_{|x|\geq R} |B_{\tau}||\D^{\bot}u||u| \notag\\
& + \varepsilon \int |\D u||u|+ 2\int |f||\D u| + C\int \frac{|f||u|}{|x|}.\notag
\end{align}

The terms related to $f$ can be treated as in the proof of Theorem 2.1 in \cite{Z}, obtaining
\begin{align}\label{fterm2}
\int \frac{|f||u|}{|x|} + \int |f||\D u| &\leq \kappa \left( |||\D u|||^{2} + \sup_{R>0}\frac{1}{R^{2}}\int_{|x|=R} |u|^{2}\right)\\
& + C_{\kappa}(N(f))^{2},\notag
\end{align}
where $\kappa$ denotes an arbitrary positive small constant.

Let us study the potential terms. On the one hand, we have
\begin{align}
\frac{1}{2}\int (\partial_{r}n)_{-}|u|^{2} & \leq \frac{1}{2}\sum_{j\in \mathbb{Z}} \int_{C(j)} \frac{(x\cdot \nabla n)_{-}}{2^{j-1}n} n|u|^{2}\\
& \leq \sum_{j\in \mathbb{Z}} \frac{(x\cdot \nabla n)_{-}}{n} |||n^{1/2}u|||^{2}.\notag
\end{align}
On the other hand, let $J$ such that $2^{J-1}\leq R \leq 2^{J}$. Then by Cauchy-Schwarz inequality yields
\begin{align}
\frac{1}{R}\int_{|x|\leq R} |x||B_{\tau}||\D u||u| & \leq\frac{1}{R} \left(\int_{|x|\leq R} |\D u|^{2}\right)^{\frac{1}{2}}\left(\int_{|x|\leq R}|x|^{2}|B_{\tau}|^{2}|u|^{2} \right)^{\frac{1}{2}}\\
& \leq \frac{1}{4R}\int_{|x|\leq R}|\D u|^{2} + \sum_{j\leq J} \frac{2^{2j}|B_{\tau}|^{2}}{n(x)}|||n^{1/2}u|||^{2}\notag
\end{align}
and
\begin{align}
\int_{|x|\geq R} |B_{\tau}||\D^{\bot}u| |u| & \leq \left( \int_{|x|\geq R}\frac{|\D^{\bot}u|^{2}}{|x|}\right)^{1/2}\left(\int_{|x|\geq R} |x||B_{\tau}|^{2}|u|^{2} \right)^{1/2}\\
& \leq \frac{1}{4}\int_{|x|\geq R} \frac{|\D^{\bot}u|^{2}}{|x|} +\sum_{j\geq J} \frac{2^{2j}|B_{\tau}|^{2}}{n(x)}|||n^{1/2}u|||^{2}.\notag
\end{align}

Finally, let us analyze the $\varepsilon$ term. In this case, the a-priori estimate (\ref{b+++0}) reads as
\begin{equation}\notag
\int |\D u|^{2} \leq \int n|u|^{2} + \int |f||u|,
\end{equation}
which together with assumptions (\ref{1.7})-(\ref{1.8}) implies
\begin{equation}\notag
\int |\D u|^{2} \leq C\left(\int n_{2}|u|^{2} + \int |f||u| \right).
\end{equation}
Hence, by the same method as in \cite{Z} it follows that
\begin{equation}\label{epsilonterm2}
\varepsilon\int |\D u||u| \leq \kappa |||n^{1/2}u|||^{2} + C_{\kappa}(\varepsilon + \sup |n_{2}|)\left( N\left(\frac{f}{n^{1/2}}\right)\right)^{2}.
\end{equation}

As a consequence, plugging (\ref{fterm2})-(\ref{epsilonterm2}) into (\ref{idergy}) and taking the supremmum over $R$, by condition (\ref{1.9}) we get (\ref{1.10}), which is our claim.
\end{proof}

\begin{remark}\label{twod}
The dimension two is a special case. In this case, with the above choice of multipliers it follows that
\begin{equation}\notag
\Delta (2\varphi - \Delta\psi) \leq -\frac{C}{|x|^{3}}.
\end{equation}
Because of this singularity at zero, we cannot recover the full result (\ref{1.10}) for $d=2$ and we cannot reach the right behavior close to $0$. With some modifications in the proof (see \cite{PV1}, section 5 for more details) and assuming that $n>n_{0}>0$, then in the two dimensional case it may be proved that for $R_{0}= n_{0}^{-1/2}$ the solution satisfies
\begin{equation}\notag
|||\D u|||_{R_{0}}^{2} + |||n^{1/2}u|||_{R_{0}}^{2} + \int_{|x|\geq R_{0}} \frac{|\D^{\bot}u|^{2}}{|x|} \leq C(1+\varepsilon)\left( N_{R_{0}}\left( \frac{f}{n^{1/2}}\right)\right)^{2}.
\end{equation} 
\end{remark}

The homogeneity of the above estimate makes it compatible with the high frequencies (replace $n$ by $\mu^{2}n$). Moreover, (\ref{1.10}) allows us to get the new energy estimate. As we have already said, the estimate of the tangential component of the magnetic gradient given in (\ref{1.10}) turns out to be fundamental. In order to get it, we need the smallness assumption given in (\ref{1.9}). However, the condition (\ref{1.9}) is necessary and can not be relaxed to a Coulomb type of decay, even if smallness is added (see \cite{PV2}, Appendix for more details).

We may now state the main result of this section, which together with the above result proves Theorem \ref{Theoremenergy1}. Its interest relies on the bounds stated in Theorem \ref{Theorem1.1}.

\begin{thm}\label{Theoremenergy12}
For dimensions $d\geq 3$, we assume (\ref{nbtau})-(\ref{1.13'}) and use the notation of Theorem \ref{Theorem1.1}. Then the solution of the Helmholtz equation (\ref{**hauda}) satisfies, for $R\geq 1$ large enough
\begin{equation}\label{enerthm1}
\int_{|x|\geq R} \left|\nabla_{\omega}n_{\infty}\left(\frac{x}{|x|}\right) \right|^{2}\frac{|u|^{2}}{|x|} \leq C\left[M^{2} + \left(N\left(\frac{f}{n^{1/2}}\right)\right)^{2}\right],
\end{equation}
for some constant $C$ independent of $\varepsilon$ and $n$.
\end{thm}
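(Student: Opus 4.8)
The plan is to derive (\ref{enerthm1}) by choosing in the abstract Morawetz identities of the Appendix a multiplier adapted to the angular structure of $n_\infty$, rather than the radial multipliers used in Theorem \ref{Theorem1.1}. Concretely, following Perthame--Vega \cite{PV2}, I would test the equation (\ref{**hauda}) against a multiplier of the form $\phi(x)\,\overline{\nabla_b u}\cdot X$ (and the associated symmetric combination), where the vector field $X$ is tangential, $X(x) = \nabla_\omega n_\infty(x/|x|)$ regarded as a field on each sphere $|x|=r$, truncated smoothly to $|x|\ge R$, and $\phi$ is a suitable radial cutoff/weight behaving like $|x|^{-1}$ at infinity. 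The point of this choice is that the commutator of $\nabla_b^2$ with such a tangential field produces, among other things, a term $\int \phi\, X\cdot\nabla n\,|u|^2$; using the decomposition $\nabla n = \partial_r n\,\frac{x}{|x|} + |x|^{-1}\nabla_\omega n$ and the fact that $X$ is tangential, this becomes essentially $\int \phi\,|x|^{-1}|\nabla_\omega n_\infty(\omega)|^2|u|^2$ up to the error coming from $|n(x)-n_\infty(\omega)|\le n_\infty \Gamma/|x|$ in (\ref{1.13'}), which is integrable against $|u|^2/|x|$ because of the Morrey--Campanato bound $|||n^{1/2}u|||^2 \le M^2$.

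The key steps, in order, would be: (1) fix the cutoff $R\ge 1$ large enough that (\ref{condin}) holds and $n\ge n_0/2$ on $|x|\ge R$; (2) apply the appropriate Appendix identity (Lemma \ref{appendix2}, say) with the tangential multiplier described above, producing on the left a positive term $\int_{|x|\ge R} |x|^{-1}|\nabla_\omega n_\infty(\omega)|^2|u|^2$ plus boundary and lower-order contributions; (3) bound all the remaining terms on the right by $M^2$ and $(N(f/n^{1/2}))^2$. The terms to control are: the tangential-gradient terms $\int |x|^{-1}|\nabla_b^\bot u|^2$ and $\int |x|^{-1}|\nabla_b^\bot u||u|$, absorbed using the third term of $M^2$ and Cauchy--Schwarz; the magnetic-field commutator terms involving $B_{jk}$ and $B_\tau$, controlled via (\ref{assubjk}) together with $|||n^{1/2}u|||$; the error term from $n-n_\infty$, controlled by (\ref{1.13'}) and $|||n^{1/2}u|||$; the $f$-terms, handled exactly as (\ref{fterm2}) via the duality $|\int fg|\le |||f||| N(g)$; and the $\varepsilon$-term, handled as in (\ref{epsilonterm2}). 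The boundary terms at $|x|=R$ are harmless since $R$ is fixed and $\int_{|x|=R}|u|^2$ is controlled by $M^2$ (after integrating in $R$ over a unit range, or directly from the trace terms appearing in the identities).

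The main obstacle, I expect, is step (2)--(3) for the magnetic commutator: when $X$ is a genuinely angular vector field, the commutator $[\nabla_b^2, X\cdot\nabla_b]$ generates terms involving the magnetic field $B$ paired with the full gradient $\nabla_b u$ (not just its tangential part), and these must be absorbed into $M^2$ using $\sum_{j\ge 0}2^{2j}|B_{jk}|^2<\infty$. Unlike the radial-multiplier case, where only $B_\tau$ enters, here one must carefully exploit the antisymmetry of $B$ and the smoothness of $n_\infty$ so that the dangerous $|\nabla_b u|^2$-type contributions come with the small weight $|x|^2|B_{jk}|^2$ and hence are summable; getting the right power of $|x|$ in front of $B$ is the delicate bookkeeping point. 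A secondary technical nuisance is that $\nabla_\omega n_\infty$ need not be divergence-free on the sphere, so the identity will also produce a term $\int \phi\,(\mathrm{div}_\omega X)\,|\nabla_b^\bot u|^2$-type contribution; this is controlled by the boundedness of $n_\infty\in C^\infty(S^{d-1})$ and again the third term of $M^2$. Once all these are collected, taking the supremum over the relevant range of $R$ and invoking Theorem \ref{Theorem1.1} to replace $M^2$ by $C(1+\varepsilon)(N(f/n^{1/2}))^2$ yields (\ref{enerthm}), completing the proof of Theorem \ref{Theoremenergy1}.
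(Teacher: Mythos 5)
Your overall strategy is the paper's: a Morawetz multiplier built from $n_{\infty}$, fed into the Appendix identity, with all error terms absorbed into $M^{2}$ and $(N(f/n^{1/2}))^{2}$ via Theorem \ref{Theorem1.1}. But the specific multiplier you propose is mis-scaled by one power of $|x|$, and this is fatal for the stated conclusion. With $X=\nabla_{\omega}n_{\infty}$ (a bounded angular field) and $\phi\sim|x|^{-1}$, the key potential term is, as you yourself write, $\int \phi\,|x|^{-1}|\nabla_{\omega}n_{\infty}|^{2}|u|^{2}\sim\int |\nabla_{\omega}n_{\infty}|^{2}|u|^{2}/|x|^{2}$, i.e.\ a weight $|x|^{-2}$ rather than the $|x|^{-1}$ of (\ref{enerthm1}); no summation over $R$ recovers the lost power. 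The paper instead takes the scalar multiplier $\psi_{q}(x)=q(|x|/R)\,n_{\infty}(x/|x|)$ with $q(r)=r$ for $r\geq 2$, so that by (\ref{partialq}) the tangential part of $\nabla\psi_{q}$ is $\frac{q}{|x|}\nabla_{\omega}n_{\infty}=\frac{1}{R}\nabla_{\omega}n_{\infty}$ for $|x|\geq 2R$ — a \emph{constant} weight $1/R$, not $|x|^{-1}$. This produces $\frac{1}{R}\int_{|x|\geq 2R}|\nabla_{\omega}n_{\infty}|^{2}|u|^{2}/|x|$ on the left, while $q'$, $q''$ and $\frac{q'}{R|x|}-\frac{q}{|x|^{2}}$ are supported in $\{|x|\leq 2R\}$, so every error term also carries the factor $1/R$; multiplying through by $R$ gives the theorem. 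Using a gradient of a scalar also lets Lemma \ref{appendix2} apply verbatim, avoiding the symmetrized-Jacobian bookkeeping for a general tangential field.

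A second genuine gap is your treatment of the $n-n_{\infty}$ error. The cross term is
\begin{equation*}
I_{3}=\int q\,|x|\,\nabla^{\bot}(n-n_{\infty})\cdot\frac{\partial n_{\infty}}{\partial\omega}\,\frac{|u|^{2}}{|x|^{2}},
\end{equation*}
which involves the \emph{angular derivative} of $n-n_{\infty}$; hypothesis (\ref{1.13'}) controls only the size of $n-n_{\infty}$, not its derivatives, so this cannot be bounded "by (\ref{1.13'}) and $|||n^{1/2}u|||$" directly. The paper integrates by parts on the sphere to move the derivative onto $n_{\infty}$ and $|u|^{2}$, then uses the diamagnetic inequality $|\nabla|u||\leq|\D u|$ to control the resulting $\nabla|u|$ by $|||\D u|||$, with the $|\nabla_{\omega}n_{\infty}|^{2}|u|^{2}/|x|^{2}$ piece absorbed into the left-hand side for small $\kappa$. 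Your identification of the magnetic commutator as the delicate point is reasonable but secondary: with the paper's multiplier it reduces, via (\ref{partialq}), to one term with $B_{\tau}\cdot\D^{\bot}u$ (controlled by the tangential bound in $M^{2}$) and one with $|x||B_{jk}|$ (controlled by (\ref{assubjk})). Finally, there is no supremum over $R$ at the end; $R$ is fixed once (\ref{condin}) holds.
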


\begin{proof}
The proof consists in using the basic identity (\ref{(4.3)}) with a test function that depends on the behavior of  $n(x)$ at infinity. We choose $R\geq 1$ such that (\ref{condin}) holds and define
\begin{equation}\notag
\psi_{q}(x) = q\left(\frac{|x|}{R}\right)n_{\infty}\left(\frac{x}{|x|} \right)
\end{equation}
for some non-decreasing smooth function
\begin{displaymath}
q(r) = \left\{ \begin{array}{ll}
0 & \textrm{for} \quad  r \leq 1\\
r & \textrm{for} \quad  r \geq 2.
\end{array} \right.
\end{displaymath}
Let us put $\psi_{q}$ into (\ref{(4.3)}), obtaining
\begin{align}\label{batuketabilaplacian}
&\frac{1}{2}\int \lambda \nabla\p \cdot \nabla\psi_{q} |u|^{2} = -\int \D u \cdot \D^{2}\psi_{q}\cdot\overline{\D u}\\
& - \frac{1}{2}\Re\int \nabla(\Delta\psi_{q})\cdot \D u \bar{u}+ \Im\sum_{j,k=1}^{d}\int \frac{\partial\psi_{q}}{\partial x_{k}}B_{jk}(\D)_{j}u \bar{u}\notag\\
&  - \Re\int f\nabla\psi_{q}\cdot \overline{\D u} - \frac{1}{2}\int f\Delta\psi_{q}\bar{u} + \varepsilon\int \nabla\psi_{q}\cdot \D u\bar{u}.\notag
\end{align}

We simplify the notation using $q=q\left( \frac{|x|}{R}\right)$, $n_{\infty}=n_{\infty}(\omega)$. Observe that
\begin{align}\label{partialq}
\frac{\partial \psi_{q}}{\partial x_{k}} = \frac{q' n_{\infty}}{R}\frac{x_{k}}{|x|} + \frac{q}{|x|}\frac{\partial n_{\infty}}{\partial\omega_{k}}.
\end{align} 
and
\begin{align}\label{laplaceq}
\Delta\psi_{q}=\frac{q''}{R^{2}}n_{\infty}+\frac{q'}{R}\frac{d-1}{|x|}n_{\infty}+\frac{q}{|x|^{2}}\Delta_{\omega} n_{\infty}.
\end{align}

The left hand side of the estimate (\ref{enerthm1}) will come from the term
\begin{equation}\notag
\int \lambda\nabla\p \cdot \nabla \psi_{q} |u|^{2} = \int \nabla n \cdot \nabla \psi_{q} |u|^{2}
\end{equation}
which can be written as follows
\begin{align}\label{clave}
\int\nabla n\cdot\nabla\psi_{q}|u|^{2} & = \int q\left|\frac{\partial n_{\infty}}{\partial\omega}\right|^{2}\frac{|u|^{2}}{|x|}\\
& +\int \partial_{r}n \frac{q'}{R}n_{\infty}|u|^{2}\notag\\
& + \int q|x|\nabla_{\tau}\left(n-n_{\infty}\right)\frac{\partial n_{\infty}}{\partial\omega}\frac{|u|^{2}}{|x|^{2}}\notag\\
& \equiv I_{1}+I_{2}+I_{3}.\notag
\end{align}
The first term on the right-hand side of (\ref{clave}) is the one that gives the lower bound of what we want to control. By (\ref{1.9}) and (\ref{1.12}), we get
\begin{equation}\notag
I_{2} \geq -\frac{C \Vert n_{\infty}\Vert_{L^{\infty}}}{R}|||n^{1/2}u|||^{2}.
\end{equation}
On the other hand, after integration by parts, by the diamagnetic inequality
$$
|\nabla |u|| \leq |\D u|
$$
(see \cite{LL}) and by (\ref{1.13'}), we obtain
\begin{align}
I_{3} & = -\int \frac{q}{|x|^{2}} (n-n_{\infty}) \left( \Delta_{\omega}n_{\infty}|u|^{2} +2\frac{\partial n_{\infty}}{\partial \omega}|x|\nabla |u||u|\right)\notag\\
& \leq \frac{C}{R}\Vert n_{\infty} \Vert_{C^{2}}|||n^{1/2}u|||^{2} + \kappa\int q\left|\frac{\partial n_{\infty}}{\partial \omega} \right|^{2}\frac{|u|^{2}}{|x|^{2}} + \frac{C(\kappa)}{R}|||\D u|||^{2},\notag
\end{align}
for $\kappa >0$.

Let us estimate now the remaining terms of the identity (\ref{batuketabilaplacian}). A straightforward computation gives
\begin{align}
\D u\cdot D^{2}\psi_{q}\cdot \overline{\D u} & = \frac{q''}{R^{2}}n_{\infty}|\D^{r}u|^{2} +\frac{q'}{R|x|}|\D^{\bot}u|^{2}n_{\infty}\notag\\
& +2\Re\left( \frac{q'}{R|x|}-\frac{q}{|x|^{2}}\right) \overline{\D^{r}u}\frac{\partial n_{\infty}}{\partial\omega}\cdot \D^{\bot}u\notag\\
& + \frac{q}{|x|^{2}}\D^{\bot}u\cdot D^{2}_{\omega}n_{\infty}\cdot \overline{\D^{\bot}u}.\notag
\end{align}
Thus since $q'$, $q''$ and $\left(\frac{q'}{R|x|} -\frac{q}{|x|^{2}} \right)$ are supported in the ball $\{|x|\leq 2R\}$, by the Cauchy-Schwarz inequality it follows that the absolute value of the above terms in the corresponding integral are bounded by
\begin{align}\notag
\frac{C\Vert n_{\infty}\Vert_{C^{2}}}{R} \left( \int \frac{|\nabla_{A}^{\bot}u|^{2}}{|x|} + |||\D u|||^{2}\right).
\end{align}
Moreover, by (\ref{laplaceq}) and (\ref{condin}) one can easily check that
\begin{align}\label{2.18}
Re\int \nabla(\Delta\psi_{q})\cdot \D u \bar{u}& \leq C\Vert n_{\infty} \Vert_{C^{3}}\int \left( \frac{q}{|x|^{3}}+\frac{q'}{R|x|^{2}} \right)|\D u||u|\notag\\
& \leq \frac{C\Vert n_{\infty} \Vert_{C^{3}}}{R}|||n^{1/2}u||| |||\D u|||.\notag
\end{align}
As far as the term containing the magnetic potential is concerned, first note that by (\ref{partialq}) and the fact that
$$
B_{\tau} \cdot \D u = B_{\tau} \cdot \D^{\bot} u,
$$ 
yields
\begin{equation}\label{bjk1}
\sum_{j,k=1}^{d} \frac{\partial \psi_{q}}{\partial x_{k}} B_{jk}(\D)_{j} u = \frac{q'n_{\infty}}{R}B_{\tau}\cdot \D^{\bot}u + \frac{q}{|x|}\sum_{j,k=1}^{d}\frac{\partial n_{\infty}}{\partial \omega_{k}}B_{jk}(\D)_{j}u.
\end{equation}
Thus by (\ref{nbtau}) and (\ref{assubjk}), we get
\begin{align}
\Im \sum_{j,k=1}^{d}\int \frac{\partial\psi_{q}}{\partial x_{k}}B_{jk}(\D)_{j}u\bar{u} & \leq \frac{C\Vert n_{\infty} \Vert}{R} \left( \int_{|x|\geq R}\frac{|\D^{\bot}u|^{2}}{|x|}\right)^{1/2}|||n^{1/2}u|||\notag\\
& + \frac{C\Vert n_{\infty} \Vert_{C^{1}}}{R} ||| \D u||| |||n^{1/2}u|||.\notag
\end{align}
%Similarly, by (\ref{condicionf}) and (\ref{laplaceq}), the terms containing the potential $V_{2}$ are upper bounded by
%\begin{align}
%\frac{1}{2}\int V_{2}\Delta\psi_{q}|u|^{2} + \Re\int V_{2}\nabla\psi_{q}\cdot\D u \bar{u} \leq \frac{C\Vert n_{\infty} \Vert_{C^{1}}}{R}\left(|||u|||^{2} + |||\D u|||^{2}\right).
%\end{align}
We now turn to analyze the terms containing $f$. On the one hand, by (\ref{laplaceq}), we have
\begin{equation}
\int |f| |\Delta\psi_{q}| |u| \leq \frac{C}{R^{2}}\int_{|x|>R}|f||u| \leq \frac{C}{R^{2}}N\left(\frac{f}{n^{1/2}}\right)|||n^{1/2}u|||\notag.
\end{equation}
On the other hand, from (\ref{partialq}) it follows that
\begin{align}
 \int |f| |\nabla\psi_{q}| &|\D u|  \leq \int \frac{q'}{R}|f||n_{\infty}||\D u| +\int \frac{q}{|x|}|f|\left|\frac{\partial n_{\infty}}{\partial\omega} \right||\D^{\bot}u| \notag\\
& \leq \frac{C\Vert n_{\infty}\Vert_{C^{1}}}{R}N\left(\frac{f}{n^{1/2}}\right)\left( |||\D u|||+ \left(\int_{|x| \geq R} \frac{|\D^{\bot} u|^{2}}{|x|} \right)^{1/2} \right)\notag.
\end{align}
Finally, the last term to be bounded is 
\begin{equation}\notag
\varepsilon\int \nabla\psi_{q}\cdot \D u \bar{u} \leq \frac{C\Vert n_{\infty}\Vert_{C^{1}}}{R}\varepsilon\int |\D u||u|,
\end{equation}
which can be done as in (\ref{epsilonterm2}).

Therefore, from the above inequalities, taking $\kappa$ small enough yields
\begin{align}
\int q\left|\frac{\partial n_{\infty}}{\partial\omega} \right|^{2}\frac{|u|^{2}}{|x|^{2}} & \leq \frac{C_{1}}{R} \left(|||n^{1/2}u|||^{2} + |||\D u|||^{2} + \int_{|x|\geq R} \frac{|\D^{\bot}u|^{2}}{|x|} \right)\notag\\
& + \frac{C_{2}}{R}\left(N\left(\frac{f}{n^{1/2}}\right)\right)^{2},\notag
\end{align}
which gives (\ref{enerthm1}) and the proof of the theorem is over.

\end{proof}

A combination of the above two results asserts the desired energy estimate (\ref{enerthm}) and the proof of Theorem \ref{Theoremenergy1} is completed.

\begin{remark}
From Remark \ref{twod} the same result holds for the two dimensional case.
\end{remark}

\begin{remark}
Condition (\ref{1.13'}) can be largely relaxed if, for example, $n - n_{\infty}$ is radial. It can be instead assumed the alternative conditions
\begin{equation}\label{2.2}
\left|n(x) - n_{\infty}\left(\frac{x}{|x|} \right) \right| \leq n\frac{\Gamma}{|x|^{\delta}} \quad \textrm{for} \quad |x|>R_{0}, \Gamma>0, \delta >0, R_{0}>1
\end{equation}
and that there exist $\tilde{\beta}<1$, $\delta >0$ and $\tilde{\Gamma}>0$ such that
\begin{equation}\notag
\left(|x|\nabla^{\bot}(n-n_{\infty}) \cdot \frac{\partial n_{\infty}}{\partial \omega}\right)_{-} \leq \tilde{\beta}\left|\frac{\partial n_{\infty}}{\partial \omega} \right|^{2} + n(x)\frac{\tilde{\Gamma}}{|x|^{\delta}}.
\end{equation}
In particular, when $n - n_{\infty}$ is radial then (\ref{2.2}) is sufficient. 
\end{remark}

\begin{remark}
Note that in order to prove the energy estimate we impose conditions in each component of the magnetic field $B_{jk}$ and not in the tangential component of $B$, as in the first result. This is due to the fact that the test function chosen in the proof of Theorem \ref{Theoremenergy1} is not radial (see (\ref{partialq}) and (\ref{bjk1}) above). 
\end{remark}

\section{Limiting absorption principle. Proof of Theorem \ref{d>3}}\label{LAPenergy}

Our next goal is to prove Theorem \ref{d>3} which asserts the limiting absorption principle for the equation (\ref{**s}), following \cite{S} and \cite{Z}. In addition, the result will be true for short range electric potentials that can have singularities at the origin and more importantly, critical singularities at the origin for the magnetic field can be considered. 

To do this, let us consider the electromagnetic Helmholtz equation
\begin{equation}\label{2.10}
\D^{2}u + \lambda(1+\p)u + Q u + i\varepsilon u = f.
\end{equation}
We first prove the corresponding Sommerfeld radiation condition and a-priori estimates for the solution $u\in H^{1}_{A}(\Rd)$ of this equation for $\lambda \in [\lambda_{0}, \lambda_{1}]$ with $0<\lambda_{0} < \lambda_{1} < \infty$ and $\varepsilon >0$. We next turn to show the uniqueness result related to this equation. Indeed, we will see that if $u$ satisfies (\ref{2.10}) with $\varepsilon = 0$ and $f=0$, then $u\equiv 0$. Consequently, we will be in a position to construct the unique solution of the equation (\ref{**s}) with the radiation condition (\ref{radiacion}) at infinity as the limit of the solution of the equation (\ref{2.10}) when $\varepsilon \to 0$ in some sense. The detailed proof of this construction is given in \cite{Z}, see subsection 2.4. Thus we will omit it.

%\vspace{0.3cm}
%We begin by proving that the Sommerfeld radiation condition holds if the Agmon-H\"ormander norm of the solution of the electromagnetic Helmholtz equation (\ref{2.10}) is %bounded. Making use of this inequality, we deduce the a-priori estimates when $\lambda\in[\lambda_{0}, \lambda_{1}]$ by a compactness argument already used in \cite{S} and %\cite{Z}. Finally, we state and prove the uniqueness of solution of the equation (\ref{**s}). 

Since the proofs are adapted from the ones of the main results of \cite{Z}, we will mainly focus on the analysis of the new terms, that is to say, $\p$.

\subsection{Sommerfeld radiation condition}

We begin by proving the Sommerfeld condition in terms of the Agmon-H\"ormander norm of the solution. This result may be proved in much the same way as Proposition 2.6 of \cite{Z}.

\begin{pro}\label{propositionradiationSaito}
For dimensions $d\geq 3$, let $\lambda_{0} > 0$, $\varepsilon > 0$, $f\in L^{2}_{\frac{1+\delta}{2}}$ and assume that (\ref{condicionf}) holds. Let $K$ be a solution of the eikonal equation (\ref{eikonal}). Then, there exists a positive constant $C = C(\lambda_{0})$ such that for $\lambda \geq  \lambda_{0}$ and $C^{*}$ small enough, any solution $u\in H^{1}_{A}(\Rd)$ of the equation (\ref{2.10}) satisfies for all $R_{1} \geq r_{0}$
\begin{align} \label{1chap3}
&\int_{K \geq R_{1}}|\D u - i\sqrt{\lambda} \nabla K u|^{2}\left(  \frac{1}{(1+K)^{1-\delta}} + \varepsilon (1+K)^{\delta} \right)\\
&+(1-\delta)\int_{K \geq R_{1}} \frac{|\nabla K|^{2}|\D u|^{2}- |\nabla K\cdot \D u|^{2}}{(1+K)^{1-\delta}}\notag\\
& \leq C(1+\varepsilon)\left(|||u|||_{1}^{2} + (N_{1}(f))^{2} + \int_{K \geq R_{1}} (1+K)^{1+\delta}|f|^{2}\right).\notag
\end{align}
\end{pro}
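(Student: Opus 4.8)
The plan is to derive \eqref{1chap3} from a Morawetz-type identity built with a radial-like multiplier adapted to the phase $K$, specializing the abstract integration-by-parts lemmas from the Appendix (Lemmas~\ref{appendix1} and \ref{appendix2}). Concretely, I would choose the multiplier $\psi(x) = \phi(K(x,C^*))$ with a weight $\phi$ satisfying $\phi'(t) = (1+t)^{-1+\delta}$, so that $\nabla\psi = \phi'(K)\nabla K$ and $D^2\psi = \phi''(K)\nabla K\otimes\nabla K + \phi'(K) D^2 K$, plus a companion zeroth-order multiplier of the form $\varepsilon(1+K)^{\delta}$-type to produce the $\varepsilon$-weighted term on the left. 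Feeding these into the basic identity \eqref{(4.3)} (with $n = \lambda(1+\p)$, $V = Q$) and collecting terms, the coercive part of the left side comes from two sources: the $D^2 K$ Hessian term, which via Lemma~\ref{lema2.5} reads $D^2 K = \frac{|\nabla K|^2}{K}(\mathrm{Id}) - \frac{1}{K}\nabla K\otimes\nabla K + \frac{1}{K}F$, contributing precisely the positive angular-type quantity $\frac{|\nabla K|^2|\D u|^2 - |\nabla K\cdot\D u|^2}{(1+K)^{1-\delta}}$ up to the lower-order $F$-error; and the cross term between the first-order multiplier and the phase, which after completing the square produces $|\D u - i\sqrt\lambda\,\nabla K u|^2$ against the weight $\phi'(K) + \varepsilon(\ldots)$. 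This is the standard Saito/Zubeldia mechanism, so I would cite the parallel computation in \cite{Z}, Proposition~2.6, and \cite{S}.

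The key steps, in order, are: (1) record the identity obtained by inserting $\psi = \phi(K)$ and the $\varepsilon$-multiplier into \eqref{(4.3)}; (2) use the eikonal equation $|\nabla K|^2 = 1+\p$ and Lemma~\ref{lema2.5} to rewrite the Hessian contribution, isolating the positive angular term and an error $\frac{1}{K}F_{ij}$ that by \eqref{0.2} can be absorbed for $C^*$ small since $\sup_{|x|\ge r_0}|F_{ij}|\to 0$; (3) complete the square in the first-order terms to recover $|\D u - i\sqrt\lambda\nabla K u|^2$; (4) estimate all remaining terms—the magnetic field term $\sum \frac{\partial\psi}{\partial x_k}B_{jk}(\D)_j u\,\bar u$ using \eqref{condicionf} to gain the decay $|x|^{-1-\mu}$ against the weight $(1+|x|)^{-1+\delta}$ with $\delta<\mu$; the potential term $\lambda\nabla\p\cdot\nabla\psi\,|u|^2$ using \eqref{assuv1} which gives $|\nabla\p|\le C^*|x|^{-1}$; the $Q$ term splitting at $r_0$ and using \eqref{(Q)} near the origin and \eqref{condicionf} at infinity; the $\Delta(\Delta\psi)$ and $\nabla(\Delta\psi)$ terms which carry extra powers of $|x|^{-1}$; and the $f$-terms via Cauchy-Schwarz with the weighted norms $(1+K)^{1+\delta}|f|^2$ and $N_1(f)$—each bounded by $\kappa$ times the left side plus $C_\kappa$ times the right side; (5) absorb the $\kappa$-terms and the $|||u|||_1^2$, $(N_1(f))^2$ remainders into the right-hand side. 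Since $g = |x|^{-1}K$ satisfies $c_0\le g\le c_1$ by \eqref{g}, the regions $\{K\ge R_1\}$ and $\{|x|\ge cR_1\}$ are comparable, so one may freely switch between $K$-weights and $|x|$-weights.

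The main obstacle I anticipate is handling the lower-order terms generated because $K$ is not exactly $|x|$: the Hessian error $F_{ij}$, the terms $\nabla(\Delta\psi)$ and $\Delta\psi$ (which in the flat case would be the harmless $\Delta^2|x| \sim |x|^{-3}$), and the genuinely nonradial potential term $\lambda\nabla\p\cdot\nabla\psi\,|u|^2$. The resolution is the smallness of $C^*$: by \eqref{propietateg} and \eqref{0.2}, the quantities $|x|^j\partial^j g$ and $F_{ij}$ tend to $0$ as $C^*\to 0$ uniformly for $|x|\ge r_0$, so that $\nabla\p\cdot\nabla\psi$ and the $F$-error can be made smaller than any fixed fraction of the coercive angular term $\int \frac{|\nabla K|^2|\D u|^2 - |\nabla K\cdot\D u|^2}{(1+K)^{1-\delta}}$, hence absorbed. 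A secondary delicate point is the treatment of the magnetic and electric singularities at the origin (\eqref{(Q)}, \eqref{(b1)}, \eqref{(b)}, \eqref{gaugesaito}); but the integration in Proposition~\ref{propositionradiationSaito} is restricted to $\{K\ge R_1\}\supset\{|x|\ge r_0 \text{ roughly}\}$, away from the origin, so these play no role here and only enter the global a-priori estimate \eqref{i} and the uniqueness argument. The argument is therefore a careful but routine adaptation of \cite{Z}, Proposition~2.6, with $|x|$ replaced by $K$ and the new error terms controlled by the Barles estimates of Section~\ref{eikonalequation}.
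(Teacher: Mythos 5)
Your overall architecture is the paper's: a Morawetz multiplier adapted to the phase $K$, the Hessian formula of Lemma \ref{lema2.5} to extract the angular term, completion of the Sommerfeld square, and absorption of the $F_{ij}$-errors via the smallness of $C^{*}$. However, there are two concrete problems with the sketch as written. First, the weight is off by one power: you take $\nabla\psi=\phi'(K)\nabla K$ with $\phi'(t)=(1+t)^{-1+\delta}$, whereas the correct choice (used in the paper, following Ikebe--Saito and \cite{Z}) is $\Psi'(t)=(1+t)^{\delta}$, i.e.\ a \emph{growing} vector field. Since the coercive quadratic forms come from one more differentiation ($\Psi''(K)\nabla K\otimes\nabla K$ and $\Psi'(K)D^{2}K$ with $D^{2}K\sim K^{-1}$), your choice produces them with weight $(1+K)^{\delta-2}$ rather than the claimed $(1+K)^{\delta-1}$, and moreover $\phi''<0$ gives the radial second-derivative contribution the wrong sign. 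Relatedly, the cross term $-2\sqrt{\lambda}\,\Im\int|\nabla K|^{2}\nabla K\cdot\D u\,\bar u\,(1+K)^{\delta-1}$ needed to complete the square does not come out of the symmetric identity (\ref{(4.3)}) at all; the paper generates it by a separate application of the anti-symmetric identity (\ref{(4.2)}) with $\varphi=\sqrt{\lambda}|\nabla K|^{2}(1+K)^{\delta}\theta(K)$, and a third multiplier $\varphi=\tfrac{1}{2\sqrt{\lambda}}\Psi'(K)\theta(K)$ in (\ref{(4.11)}) for the $\varepsilon$-weighted square. Your sketch names only the $\varepsilon$-companion, so the source of the cross term is missing.

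Second, and more substantively, your plan for the potential term $\tfrac{\lambda}{2}\int\nabla\p\cdot\nabla\psi\,|u|^{2}$ via the naive bound $|\nabla\p|\leq C^{*}|x|^{-1}$ from (\ref{assuv1}) fails: it leaves $\lambda C^{*}\int(1+K)^{\delta-1}|u|^{2}$, which is controlled neither by $|||u|||_{1}^{2}$ (the dyadic sum $\sum_{j}2^{j\delta}$ diverges) nor by the left-hand side of (\ref{1chap3}), since the Sommerfeld square does not dominate $\lambda|u|^{2}$ pointwise; smallness of $C^{*}$ does not rescue a term that is not majorized by anything on either side. The paper's resolution is algebraic: by the eikonal equation and (\ref{segunda}) one has $\partial_{k}\p=\tfrac{2}{K}\sum_{j}F_{kj}\partial_{j}K$ (identity (\ref{p})), so this term, the Hessian $F$-error, and the $F$-term arising from the anti-symmetric multiplier of Step 2 combine \emph{exactly} into the single quadratic form $-\int\tfrac{(1+K)^{\delta}}{K}\sum_{k,j}(D_{K})_{k}u\,F_{kj}\,\overline{(D_{K})_{j}u}\,\theta(K)$ with $D_{K}u=\D u-i\sqrt{\lambda}\nabla K\,u$, which is bounded by $CC^{*}$ times the Sommerfeld term on the left and absorbed for $C^{*}$ small. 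Without this recombination the three error terms are individually unabsorbable, so this is the essential missing idea rather than a routine detail.
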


\begin{proof}
The proof will be divided into three steps and it consists in the construction of the Sommerfeld terms which contain the square
\begin{equation}\label{somsquare}
|\D u - i \sqrt{\lambda} \nabla K u|^{2} = |\D u|^{2} + \lambda|\nabla K|^{2}|u|^{2} - 2\Im \sqrt{\lambda}\nabla K \cdot \D u \bar{u}.
\end{equation}
We use the identities proved in Lemma \ref{appendix1} and Lemma \ref{appendix2}. In this case, one must choose the multipliers depending on the solution of the eikonal equation. By abuse of notation, we write $\nabla \psi$ instead of a vector field $E$.

Let $R_{1}\geq r_{0}$. We take a cut off function $\theta \in C^{\infty}(\R)$ such that $0\leq \theta \leq 1$, $d\theta/dr \geq 0$ with 
\begin{equation}\notag
\theta(r) = \left\{ \begin{array}{ll}
1 & \textrm{if $r \geq R_{1}+1$}\\
0 & \textrm{if $r \leq R_{1}$},
\end{array} \right.
\end{equation}
and set $\theta(K)=\theta(K(x,C^{*}))$. We define $\Psi:\R \to \R$ such that
$$
\Psi'(r) = (1+r)^{\delta}, \quad \quad \quad 0<\delta <1
$$
and we set $\Psi(K)=\Psi(K(x,C^{*}))$.

\textbf{Step 1.} Our first goal is to obtain the term
$$
\int |\nabla K|^{2} (|\D u |^{2} + \lambda |\nabla K|^{2}|u|^{2}) \frac{\theta(K)}{(1+K)^{1-\delta}}.
$$

Let us first compute
$$
(\ref{(4.3)}) + (\ref{(4.11)}), 
$$
with the following choice of the multipliers
\begin{align}
& E=\nabla\psi= \Psi'(K)\nabla K \theta(K)\notag\\
& \varphi(x) = \frac{\delta|\nabla K|^{2}}{2(1+K)^{1-\delta}}\theta(K)\notag,
\end{align}
respectively. Let us analyze all the terms of the resulting identity by the same method as in the proof of Proposition 2.6 in \cite{Z}. In what follows, $\kappa$ denotes an arbitrary positive small constant and we use the same letter $C$ for any positive constant.

On the one hand, by (\ref{segunda}) and the fact that $\theta'$ is nonnegative, we get
\begin{align}
&\int \D u\cdot D^{2}\psi\cdot \overline{\D u} - \int \varphi |\D u|^{2}   > \frac{\delta}{2} \int_{\Rd} \frac{|\nabla K|^{2}|\D u|^{2}}{(1+K)^{1-\delta}}\theta(K)\notag\\
& +\int \theta(K) \left( \frac{(1+K)^{\delta}}{K}-\frac{\delta}{(1+K)^{1-\delta}} \right)\{|\nabla K|^{2}|\D u|^{2}-|\nabla K \cdot \D u|^{2}\}\notag\\
& + \int \frac{(1+K)^{\delta}}{K}\sum_{k,j=1}^{d}(\D)_{k}u F_{kj} \overline{(\D)_{j}u} \theta(K)\notag\\
& \equiv I_{1} + I_{2} +I_{3},\notag
\end{align}
where 
\begin{equation}\notag
I_{2} \geq (1-\delta)\int \frac{\theta(K)}{(1+K)^{1-\delta}}\{|\nabla K|^{2}|\D u|^{2}- |\nabla K\cdot \D u|^{2}\}.
\end{equation}
On the other hand, observe that in order to get the term related to $|u|^{2}$ of the Sommerfeld square $|\D u - i\lambda^{1/2}\nabla K u|^{2}$, we need to use the eikonal equation (\ref{eikonal}). Indeed, we have
\begin{align}\notag
\int \varphi \lambda(1+\p) |u|^{2} &= \frac{\delta}{2}\int \frac{ |\nabla K|^{2} \lambda|\nabla K|^{2} |u|^{2}}{(1+K)^{1-\delta}}\theta(K).
\end{align}
Moreover, by the eikonal equation $\p(x)=|\nabla K|^{2}-1$ and (\ref{segunda}), it follows that
\begin{equation}\label{p}
\frac{\partial\p}{\partial x_{k}} = 2\sum_{j=1}^{d} \frac{1}{K}F_{kj}\frac{\partial K}{\partial x_{j}} \quad \textrm{for all $k=1, \ldots, d$}.  
\end{equation}
Thus the other term involving the potential $\p$ gives
\begin{align}
-\frac{\lambda}{2}\int \nabla\p\cdot\nabla\psi|u|^{2} &= -\lambda\sum_{k,j=1}^{d} \int \frac{(1+K)^{\delta}}{K}\frac{\partial K}{\partial x_{k}}F_{kj}\frac{\partial K}{\partial x_{j}}|u|^{2}\theta(K)\equiv I_{4}.\notag
\end{align}
Let us treat now the terms containing the magnetic field $B$ and the potential $Q$. Since $c\leq |\nabla K|^{2} \leq \tilde{c}$ for some $c,\tilde{c}>0$, by (\ref{condicionf}) and Cauchy-Schwarz inequality we get
\begin{align}
\sum_{k,m =1}^{d} \int \frac{\partial \psi}{\partial x_{k}} B_{km} u& \overline{(\D)_{m}u}  \leq C\int |B_{km}| |\D u| |u|(1+K)^{\delta}\theta(K)\notag\\
& \leq \kappa\int |\nabla K|^{2}|\D u-i\sqrt{\lambda}\nabla K u|^{2}\frac{\theta(K)}{(1+K)^{1-\delta}}\notag\\
& +C_{\kappa}(\sqrt{\lambda}+1)|||u|||_{1}^{2}.\notag
\end{align}
Similarly, by (\ref{condicionf}) we have
\begin{align} \Re  \int Q \nabla \psi \cdot \D u \bar{u} 
& \leq \kappa\int \frac{|\nabla K|^{2} |\D u -i \sqrt{\lambda}\nabla K u|^{2}\theta(K)}{(1+K)^{1-\delta}}\notag\\
& + C_{\kappa}(\sqrt{\lambda}+1)|||u|||_{1}^{2}.\notag
\end{align}
In addition, since 
$$
\Delta \psi = \Psi''(K)|\nabla K|^{2}\theta(K) + \Psi'(K)\Delta K\theta(K) + \Psi'(K)|\nabla K|^{2} \theta'(K),
$$
by (\ref{g}), (\ref{propietateg}) it may be concluded that
\begin{align}\notag
-\int \varphi Q |u|^{2} +  \frac{1}{2}\int Q \Delta \psi |u|^{2} \leq C|||u|||_{1}^{2}.
\end{align} 
As a consequence, we get the inequality 
\begin{align}\label{step1}
&\frac{\delta}{2} \int |\nabla K|^{2} ( |\D u|^{2} + \lambda|\nabla K|^{2} |u|^{2}) \frac{\theta(K)}{(1+K)^{1-\delta}}\\ 
& +(1-\delta)\int \frac{\theta(K)}{(1+K)^{1-\delta}}\{|\nabla K|^{2}|\D u|^{2}- |\nabla K\cdot \D u|^{2}\}\notag\\
&- \varepsilon Im \int\theta(K)\Psi'(K)\nabla K \cdot \D u \bar{u}  \leq -I_{3} + I_{4}\notag\\
& +2\kappa\int \frac{|\nabla K|^{2} |\D u -i \sqrt{\lambda}\nabla K u|^{2}\theta(K)}{(1+K)^{1-\delta}} +C(\sqrt{\lambda} +1)|||u|||_{1}^{2}\notag\\
& -\Re \int f \left(\Psi'(K)\nabla K\cdot\overline{\D u} +\frac{1}{2}\Psi'(K)\Delta K \right)\theta(K)\bar{u}\notag\\
& -\frac{\Re}{2}\int \Psi'(K)|\nabla K|^{2}\theta'(K)\bar{u}.\notag
\end{align}

\textbf{Step 2.} In order to obtain the desired square, the next step is to get the cross term
$$
-2\Im \int \sqrt{\lambda}|\nabla K|^{2} \nabla K \cdot \D u \bar{u} \frac{\theta(K)}{(1+K)^{1-\delta}}.
$$

Let us add to the above inequality (\ref{step1}) the identity $(\ref{(4.2)})$ with the choice of a test function 
\begin{equation}\notag
\varphi(x)= \sqrt{\lambda}|\nabla K|^{2}(1+K)^{\delta}\theta(K).
\end{equation}
Hence, it follows that
\begin{align}
\Im \int \nabla \varphi \cdot \D u \bar{u} & = \delta \Im\sqrt{\lambda} \int \frac{\theta(K)}{(1+K)^{1-\delta}}|\nabla K|^{2}\nabla K\cdot \D u  \bar{u}\notag\\
& + \sqrt{\lambda}\Im \int |\nabla K|^{2} \theta^{'}(K)(1+K)^{\delta} \nabla K \cdot \D u \bar{u}\notag\\
& +2\sqrt{\lambda}\Im\int \frac{(1+K)^{\delta}}{K}\sum_{k,j=1}^{d}(\D)_{k}uF_{kj}\frac{\partial K}{\partial x_{j}}\bar{u}\theta(K)\notag\\
& \equiv I_{5} + I_{6} + I_{7}.\notag
\end{align}
The term $I_{5}$ is used to complete the square $|\D u - i\sqrt{\lambda}\nabla K u|^{2}$; $I_{6}$ can be upper bounded by 
\begin{equation}\notag
\kappa\int |\nabla K|^{2}|\D u-i\sqrt{\lambda}\nabla K u|^{2}\frac{\theta(K)}{(1+K)^{1-\delta}} + C_{\kappa,R_{1}}(1+\lambda)|||u|||_{1}^{2}.
\end{equation}
In addition, denoting $(D_{K})_{i}u=(\D)_{i}u-i\sqrt{\lambda}\frac{\partial K}{\partial x_{i}}u$, by (\ref{F}) it may be concluded that
\begin{align}
-I_{3} + I_{4} + I_{7} & = -\int \frac{(1+K)^{\delta}}{K}\sum_{k,j=1}^{d}(D_{K})_{k}uF_{kj}\overline{(D_{K})_{j}u}\theta(K)\notag\\
& \leq C C^{*}\int \frac{|\nabla K|^{2} |\D u -i \sqrt{\lambda}\nabla K u|^{2}\theta(K)}{(1+K)^{1-\delta}}.\notag
\end{align}

Therefore, we deduce
\begin{align}
&\frac{\delta}{2}\int |\nabla K|^{2}|\D u -i\sqrt{\lambda}\nabla K u|^{2}\frac{\theta(K)}{(1+K)^{1-\delta}}\notag\\
& +(1-\delta)\int  \frac{\theta(K)}{(1+K)^{1-\delta}}\{|\nabla K|^{2}|\D u|^{2}- |\nabla K\cdot \D u|^{2}\} \notag\\
& +\varepsilon\sqrt{\lambda}\int |\nabla K|^{2}(1+K)^{\delta}|u|^{2}\theta(K) - \varepsilon\Im\int \theta(K) (1+K)^{\delta}\nabla K \cdot\nabla_{A}u\bar{u}\notag\\
& \leq C(\lambda +1)|||u|||^{2} + (3\kappa+CC^{*})\int \frac{|\nabla K|^{2}|\D u -i\sqrt{\lambda}\nabla K u|^{2}\theta(K)}{(1+K)^{1-\delta}}\notag\\
& -\Re\int f(1+K)^{\delta}\nabla K \cdot (\overline{\D u} + i\lambda^{1/2}\nabla K\bar{u})\theta(K)\notag\\
& -\frac{\Re}{2}\int f\Psi'(K)(\Delta K \theta(K)+|\nabla K|^{2}\theta'(K))\bar{u}.\notag
\end{align}

\textbf{Step 3.} In order to conclude the proof of Proposition \ref{propositionradiationSaito}, we build the Sommerfeld square (\ref{somsquare}) for the $\varepsilon$ term.

Let us subtract the identity $(\ref{(4.11)})$ multiplied by $\varepsilon$ to the above inequality choosing the test function
\begin{displaymath}
\varphi(x)=\frac{1}{2\sqrt{\lambda}}\Psi'(K)\theta(K),
\end{displaymath}
so that we get 
\begin{equation}\notag
\frac{\varepsilon}{\lambda^{1/2}}\int |\nabla K|^{2} (1+K)^{\delta}|\D u - i\lambda^{1/2}\nabla K u|^{2}\theta(K).
\end{equation}

In order to complete the estimate, by integration by parts and the a-priori estimate (\ref{a+++0}), we have
\begin{align}
\varepsilon\Re \int\nabla\varphi\cdot \D u\bar{u} & =\frac{\varepsilon}{2}\int \Delta\varphi |u|^{2}\notag\\
&\leq C\varepsilon\int |u|^{2} \leq CN_{1}(f)|||u|||_{1}.\notag
\end{align}
Furthermore, by (\ref{condicionf}) we deduce
\begin{align}
\varepsilon\int \varphi Q |u|^{2} &\leq \frac{C\varepsilon}{\lambda^{1/2}}\int_{|x|\geq r_{0}} \frac{|u|^{2}}{(1+|x|)^{1+\mu-\delta}}\notag\\
& \leq C \varepsilon|||u|||_{1}^{2}.\notag
\end{align}
Finally, let us estimate the terms containing $f$. On the one hand, we have
\begin{align}
-\Re\int f(1+K)^{\delta}&\nabla K \cdot (\overline{\D u} +i\lambda^{1/2}\nabla K \bar{u})\theta(K)\notag\\
&\leq \kappa\int  |\nabla K|^{2} |\D u - i\lambda^{1/2}\nabla K u|^{2}\frac{\theta(K)}{(1+K)^{1-\delta}}\notag\\
& + C(\kappa)\int (1+K)^{1+\delta} |f|^{2}\theta(K)\notag.
\end{align}
By (\ref{propietateg}), we get
\begin{align}
-\frac{\Re}{2}\int &\Psi'(K) (\Delta K \theta(K) + |\nabla K|^{2}\theta'(K))f\bar{u}\notag\\
& \leq C\left( |||u|||_{1}^{2} + \int (1+K)^{1+\delta}|f|^{2}\theta(K)\right).\notag
\end{align}
By the a-priori estimate (\ref{a+++0}), yields
\begin{align}
-\frac{\varepsilon}{2\sqrt{\lambda}}\Re &\int (1+K)^{\delta}f\bar{u}\theta(K)\notag\\
& \leq \left(\frac{4\varepsilon}{\lambda}\int|f|^{2}(1+K)^{1+\delta}\theta(K) \right)^{1/2}\left(\varepsilon\int |u|^{2} \right)^{1/2}\notag\\
& \leq C\left( \varepsilon\int |f|^{2}(1+K)^{1+\delta}\theta(K) + |||u|||_{1}^{2} + (N_{1}(f))^{2}\right).\notag
\end{align}

Consequently, taking $\kappa > 0$ and $C^{*}$ small enough, we obtain (\ref{1chap3}) and the proof is complete.
\end{proof}

\begin{cor}
Under the assumption of Proposition \ref{propositionradiationSaito}, the solution $u\in H^{1}_{A}(\Rd)$ of the Helmholtz equation (\ref{2.10}) satisfies 
\begin{align}\label{radiacion1}
&\int_{|x|\geq r_{0}} \frac{|\D u -i\lambda^{1/2}\nabla K u|^{2}}{(1+|x|)^{1-\delta}} + \varepsilon\int_{|x|\geq r_{0}} (1+|x|)^{\delta}|\D u -i\lambda^{1/2}\nabla K u|^{2}\\
& \leq C(1+\varepsilon)\left(|||u|||_{1}^{2} + (N_{1}(f))^{2} + \int_{|x|\geq r_{0}} (1+|x|)^{1+\delta}|f|^{2}\right),\notag
\end{align}
for $\lambda\geq \lambda_{0}$, $C^{*}$ small enough and $C=C(\lambda_{0})$.
\end{cor}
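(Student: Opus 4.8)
The plan is to transfer the estimate (\ref{1chap3}), which lives on the set $\{K\ge R_1\}$, to the set $\{|x|\ge r_0\}$, exploiting that $K$ and $|x|$ are comparable. Recall from (\ref{g}) that $K(x,C^*)=|x|\,g(x,C^*)$ with $c_0\le g\le c_1$ for $|x|\ge r_0$, so for $C^*$ small we may take $c_0\le 1\le c_1$ (and, by (\ref{propietateg}), $c_0,c_1$ absolute); set $R_1:=c_1 r_0\ge r_0$. Then $\{K\ge R_1\}\subseteq\{|x|\ge r_0\}$ while $\{|x|\ge (c_1/c_0)r_0\}\subseteq\{K\ge R_1\}$, and on either set $1+K\le c_1(1+|x|)$ and $1+|x|\le c_0^{-1}(1+K)$, so all the weights $(1+K)^{\pm(1-\delta)}$, $(1+K)^{\pm\delta}$, $(1+K)^{1+\delta}$ are comparable to the corresponding powers of $1+|x|$ with constants depending only on $c_0,c_1$. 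I would then split $\{|x|\ge r_0\}=A_0\cup\{|x|\ge (c_1/c_0)r_0\}$, where $A_0:=\{r_0\le|x|\le (c_1/c_0)r_0\}$ is a fixed compact annulus bounded away from the origin since $r_0\ge 1$.

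On the unbounded piece, which is contained in $\{K\ge R_1\}$, the contribution to the left-hand side of (\ref{radiacion1}) is, by weight comparability, bounded by a constant times the first term of the left-hand side of (\ref{1chap3}) (one simply discards the nonnegative term $(1-\delta)\int_{K\ge R_1}\frac{|\nabla K|^2|\D u|^2-|\nabla K\cdot\D u|^2}{(1+K)^{1-\delta}}$, nonnegative by Cauchy--Schwarz and $\delta\le 1$); applying (\ref{1chap3}) and then using $\{K\ge R_1\}\subseteq\{|x|\ge r_0\}$ together with $(1+K)^{1+\delta}\le C(1+|x|)^{1+\delta}$ produces precisely the right-hand side of (\ref{radiacion1}). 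It remains to handle the compact annulus $A_0$: there $(1+|x|)^{-(1-\delta)}$ and $(1+|x|)^\delta$ are bounded above and below, and $|\nabla K|^2=1+\p\le 2$ by (\ref{eikonal}) and (\ref{assuv1}), so the $A_0$-contribution to the left side of (\ref{radiacion1}) is at most $C(1+\varepsilon)\int_{A_0}|\D u-i\sqrt{\lambda}\nabla K u|^2\le C(1+\varepsilon)\int_{A_0}(|\D u|^2+\lambda|u|^2)$. Here $\int_{A_0}|u|^2\le C|||u|||_1^2$ directly from the definition of $|||\cdot|||_1$ (take $R=(c_1/c_0)r_0$).

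For $\int_{A_0}|\D u|^2$ I would use a standard local energy (Caccioppoli) estimate: fix $\chi\in C_c^\infty(\Rd)$ with $0\le\chi\le 1$, $\chi\equiv 1$ on $A_0$, supported in a slightly larger annulus $\tilde A_0$ still bounded away from $0$, so that $Q,B,\p$ are bounded on $\tilde A_0$ by (\ref{(Q)}), (\ref{condicionf}), (\ref{assuv1}). Pairing (\ref{2.10}) with $\chi^2\bar u$, integrating by parts in the magnetic Laplacian, and taking real parts (so the $i\varepsilon$ term drops out) gives $\int\chi^2|\D u|^2=\lambda\int(1+\p)\chi^2|u|^2+\int Q\chi^2|u|^2-2\Re\int\chi\nabla\chi\cdot\D u\,\bar u-\Re\int f\chi^2\bar u$; absorbing the cross term via $2\chi|\nabla\chi||\D u||u|\le\tfrac12\chi^2|\D u|^2+2|\nabla\chi|^2|u|^2$, estimating the potential terms by $C(1+\lambda)\int_{\tilde A_0}|u|^2$, and the forcing term by $|\Re\int f\chi^2\bar u|\le C|||u|||_1\,N_1(f)$ (duality of the Morrey--Campanato norm with the index $1$, using $0\le\chi\le 1$), one gets $\int_{A_0}|\D u|^2\le C(1+\lambda)|||u|||_1^2+C(N_1(f))^2$. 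Combining with the previous paragraph proves (\ref{radiacion1}). The steps are all routine; the only points requiring care are that the local estimate on $A_0$ must be phrased in terms of $|||u|||_1$ and $N_1(f)$ alone (the global $L^2$ norm of $u$ is not controlled by these), and that the $\lambda$-dependence must be tracked exactly as in the proof of Proposition \ref{propositionradiationSaito}, where terms of the form $C(\lambda+1)|||u|||_1^2$ already appear.
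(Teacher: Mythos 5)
Your argument is correct, but it goes in the opposite direction from the paper's (one-line) proof and is consequently much longer. The paper takes $R_{1}=c_{0}r_{0}$: since $K=|x|g$ with $c_{0}\leq g\leq c_{1}$ on the domain $|x|\geq r_{0}$ where $K$ is defined, every point with $|x|\geq r_{0}$ satisfies $K\geq c_{0}r_{0}$, so $\{|x|\geq r_{0}\}\subseteq\{K\geq R_{1}\}$ and the left-hand side of (\ref{radiacion1}) is directly dominated by the left-hand side of (\ref{1chap3}) after replacing the weights $(1+K)^{\pm}$ by comparable powers of $1+|x|$; no leftover region remains and the corollary follows at once. You instead chose $R_{1}=c_{1}r_{0}$, which gives the reverse inclusion $\{K\geq R_{1}\}\subseteq\{|x|\geq r_{0}\}$, forcing you to treat the compact annulus $A_{0}=\{r_{0}\leq|x|\leq (c_{1}/c_{0})r_{0}\}$ separately via a Caccioppoli estimate. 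That extra step is carried out correctly (the real-part pairing with $\chi^{2}\bar u$ kills the $i\varepsilon$ term, the potentials are bounded on $\tilde A_{0}$, and the local $L^{2}$ norms of $u$ and $f$ on a fixed annulus are controlled by $|||u|||_{1}$ and $N_{1}(f)$), and the resulting $C(1+\lambda)|||u|||_{1}^{2}$ term is of the same form as terms already present in the proof of Proposition \ref{propositionradiationSaito}, so nothing is lost. What your route buys is that it literally respects the hypothesis $R_{1}\geq r_{0}$ of the proposition (the paper's $c_{0}r_{0}$ may be slightly smaller than $r_{0}$ when $c_{0}<1$, a point the paper glosses over since $g\to 1$ as $C^{*}\to 0$); what it costs is an entire auxiliary local energy estimate that the intended choice of $R_{1}$ renders unnecessary.
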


\begin{proof}
We need only take $R_{1}=c_{0}r_{0}$ with $c_{0}, r_{0}$ given in section \ref{eikonalequation} and use (\ref{g}).
\end{proof}

\subsection{A priori estimates for $\lambda\in [\lambda_{0}, \lambda_{1}]$}
Using the previous result, we are now in a position to prove the a-priori estimates for the frequency $\lambda$ varying in a compact set. We will deduce them by a compactness argument already used in \cite{S} and \cite{Z}.

\begin{pro}\label{proposition1}
For $d\geq 3$, under the hypotheses of Proposition \ref{propositionradiationSaito}, let $\lambda_{0} > 0$, $\lambda \in [\lambda_{0}, \lambda_{1}]$, with $\lambda_{1} >\lambda_{0}$ and $\varepsilon \in (0, \varepsilon_{1})$. Then, the solution $u\in H^{1}_{A}(\Rd)$ of the Helmholtz equation (\ref{2.10}) satisfies
\begin{equation}\label{landapeque–o}
\lambda|||u|||_{1}^{2} + |||\D u|||_{1}^{2}  \leq C (1+\varepsilon) (N_{1}(f))^{2},
\end{equation}
where $C=C(\lambda_{0}, \varepsilon_{1})$.
\end{pro}

\begin{proof}
The proof is a combination of the proof of Proposition 3.1 in \cite{S} and the proof of Proposition 2.10 in \cite{Z}.

Let $B_{T}$ be the interior of the closed surface $\Sigma_{T}=\{x: K(x,C^{*})=T\}$ with $T> r_{0}$ and $C^{*}<C_{0}$, where $r_{0}$ and $C_{0}$ are given constants related to the assumptions of the potentials and the solution to the eikonal equation, respectively. Let us multiply the equation (\ref{2.10}) by $\bar{u}$, integrate over $B_{T}$ and take the imaginary part, obtaining
\begin{equation}
\Im \int_{\Sigma_{T}} \frac{\nabla K}{|\nabla K|}\cdot \D u\bar{u} +\varepsilon\int_{B_{T}}|u|^{2}=\Im\int_{B_{T}}f\bar{u}\notag.
\end{equation}
From this it follows that 
\begin{equation}\label{3.3}
2\sqrt{\lambda}\Im \int_{\Sigma_{T}} \frac{\nabla K}{|\nabla K|}\cdot \D u\bar{u} \leq 2\sqrt{\lambda}\Im\int_{B_{T}}f\bar{u}.
\end{equation}
Let us integrate now the identity
\begin{equation}\label{3.5}
\frac{|\D u|^{2}}{|\nabla K|} + \lambda|\nabla K| |u|^{2} = \frac{1}{|\nabla K|}|\D u -i\sqrt{\lambda}\nabla K u|^{2} + 2 \Im \sqrt{\lambda} \frac{\nabla K}{|\nabla K|} \cdot \D u \bar{u}\notag
\end{equation}
over the surface $\Sigma_{T}$. Then by (\ref{3.3}) we get
\begin{align}\label{3.6}
\int_{\Sigma_{T}}\left(\frac{|\D u|^{2}}{|\nabla K|} + \lambda|\nabla K| |u|^{2}\right) & \leq \int_{\Sigma_{T}} \frac{1}{|\nabla K|} |\D u -i\sqrt{\lambda}\nabla K u|^{2}\\
&  +2\sqrt{\lambda}N_{1}(f)|||u|||_{1}.\notag
\end{align}
\noindent
Let $R>\frac{\rho c_{0}}{c_{1}}$, where $\rho \geq r_{0}$, being $c_{0}$, $c_{1}$ as in (\ref{g}). Let us multiply both sides of (\ref{3.6}) by $\frac{1}{R}$ and integrate from $\rho c_{0}$ to $Rc_{1}$ with respect to $T$. Hence, as $|\nabla K|^{2}$ is lower bounded by a positive constant we have
\begin{align}
\frac{1}{R}\int_{\rho c_{0} \leq K \leq Rc_{1}} (\lambda|u|^{2} + |\D u|^{2}) & \leq \frac{1}{R}\int_{\rho c_{0}\leq K \leq Rc_{1}} |\D u - i\lambda^{1/2}\nabla K u|^{2}\notag\\
& + C\sqrt{\lambda}N(f)|||u|||.\notag
\end{align}
On the other hand, observe that since $K=|x|g$ and $c_{0}\leq g \leq c_{1}$, yields
$$
\{\rho\leq |x|\leq R \} \subset \{\rho c_{0} \leq K \leq Rc_{1} \} \subset \left\{ \frac{\rho c_{0}}{c_{1}} \leq |x| \leq \frac{Rc_{1}}{c_{0}} \right\}. 
$$
Consequently, denoting $j_{0}$ and $j_{1}$ by $2^{j_{0}-1} \leq \frac{\rho c_{0}}{c_{1}} \leq 2^{j_{0}}$ and $2^{j_{1}-1} \leq \frac{Rc_{1}}{c_{0}} \leq 2^{j_{1}}$, respectively, we deduce
\begin{align}\notag
\frac{1}{R}\int_{\rho\leq |x| \leq R} (\lambda|u|^{2} + |\D u|^{2})  & \leq \frac{1}{R}\sum_{j=j_{0}}^{j_{1}} \int_{C(j)}|\D u -i \lambda^{1/2}\nabla K u|^{2}\\
& + \kappa\lambda|||u|||_{1}^{2} + C(\kappa)(N_{1}(f))^{2}.\notag
\end{align}

Now, note that we are in the same position as in (2.42) of the proof of Proposition 2.10, \cite{Z}. Therefore, by (\ref{radiacion}), repeating the same reasoning to this case, it may be concluded that for $R>1$
\begin{equation}\notag
\frac{1}{R}\int_{|x|\leq R} (\lambda|u|^{2} + |\D u|^{2}) \leq \frac{\lambda}{2}|||u|||_{1}^{2} + C(1+\varepsilon)(N_{1}(f))^{2}.
\end{equation}
Thus taking the supremum over $R$, the proposition follows.
\end{proof}

\subsection{Uniqueness result}

This paragraph deals with the uniqueness of solution of the equation (\ref{**s}). Let us consider the homogeneous Helmholtz equation
\begin{equation}\label{homoSaito}
\D^{2}u + \lambda(1+\p)u + Q u =0.
\end{equation}
Then we formulate the uniqueness theorem as follows.

\begin{thm}\label{unicidad2}
Let $d \geq 3$, $\lambda_{0}>0$ and assume (\ref{condicionf}), (\ref{gaugesaito}). Let $u$ be a solution of the equation (\ref{homoSaito}) with $u,\D u \in L^{2}_{loc}$ such that 
\begin{equation}
\liminf \int_{|x|= r} (|\D u|^{2} + \lambda|u|^{2}) d\sigma(x) \to 0, \quad \textrm{as} \quad r \to \infty, \label{1.13}
\end{equation}
for $\lambda\geq \lambda_{0}$. Then $u \equiv 0$. 
\\
Moreover, if for some $\delta>0$ the condition
\begin{equation}\label{somunic}
\int_{|x|\geq 1} |\D u - i\lambda^{1/2}\nabla Ku|^{2}\frac{1}{(1+|x|)^{1-\delta}} < \infty
\end{equation}
is satisfied, then (\ref{1.13}) holds.
\end{thm}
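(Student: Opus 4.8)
The plan is to treat the two assertions separately: first the elementary implication \eqref{somunic}$\Rightarrow$\eqref{1.13}, then the uniqueness statement under \eqref{1.13}, which I would prove in three steps — an a priori/radiation bound, a Rellich--Vekua argument, and a unique continuation step.

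\emph{From \eqref{somunic} to \eqref{1.13}.} Multiply \eqref{homoSaito} by $\bar u$, integrate over $B_T=\{K<T\}$ and take imaginary parts. Since $\p,Q$ are real and $\varepsilon=f=0$, integration by parts leaves only the boundary flux, so that $\Im\int_{\Sigma_T}\frac{\nabla K}{|\nabla K|}\cdot\D u\,\bar u\,d\sigma=0$ for every $T>r_0$, where $\Sigma_T=\partial B_T$ and $\nabla K/|\nabla K|$ is its outer unit normal. Inserting this into the pointwise identity
\[
\frac{|\D u|^{2}}{|\nabla K|}+\lambda|\nabla K|\,|u|^{2}=\frac{1}{|\nabla K|}\left|\D u-i\sqrt{\lambda}\,\nabla K\,u\right|^{2}+2\sqrt{\lambda}\,\Im\frac{\nabla K}{|\nabla K|}\cdot\D u\,\bar u
\]
and integrating over $\Sigma_T$ gives $\int_{\Sigma_T}\!\left(\frac{|\D u|^{2}}{|\nabla K|}+\lambda|\nabla K||u|^{2}\right)=\int_{\Sigma_T}\frac{1}{|\nabla K|}\left|\D u-i\sqrt\lambda\nabla K u\right|^{2}$. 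By the coarea formula together with \eqref{g} (which makes $|\nabla K|$ bounded above and below and $K\asymp|x|$), the left-hand side of \eqref{somunic} dominates $\int^{\infty}\!\left(\int_{\Sigma_T}\frac{1}{|\nabla K|}|\D u-i\sqrt\lambda\nabla K u|^{2}\,d\sigma\right)(1+T)^{\delta-1}\,dT$; since $\int^{\infty}(1+T)^{\delta-1}\,dT=+\infty$ for $\delta>0$, there is a sequence $T_n\to\infty$ along which $\int_{\Sigma_{T_n}}\frac{1}{|\nabla K|}|\D u-i\sqrt\lambda\nabla K u|^{2}\to0$, hence $\int_{\Sigma_{T_n}}(|\D u|^{2}+\lambda|u|^{2})\to0$; as the surfaces $\Sigma_T$ are uniformly $C^{3}$ graphs over spheres of radius $\asymp T$, this yields \eqref{1.13}.

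\emph{Uniqueness under \eqref{1.13}: a priori bounds and radiation condition.} Assume \eqref{1.13} and let $R_n\to\infty$ realize it. I would first upgrade \eqref{1.13} to finite Agmon--H\"ormander norms and a Sommerfeld condition by running the identities of Lemmas \ref{appendix1}--\ref{appendix2} for the homogeneous equation with the eikonal multipliers of the proof of Proposition \ref{propositionradiationSaito}, but integrated over $\{|x|\le R_n\}$: because $f=\varepsilon=0$, all boundary terms on $\{|x|=R_n\}$ tend to $0$ by \eqref{1.13}, and — exactly as in Proposition \ref{propositionradiationSaito} — the long-range term $-\tfrac{\lambda}{2}\int\nabla\p\cdot\nabla\psi\,|u|^{2}$ is converted into an $o(1)$ quantity via the eikonal identity \eqref{p} and the bound \eqref{0.2}, while the magnetic and $Q$ terms are absorbed using \eqref{condicionf} and the smallness of the relevant constants. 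Passing to the limit $n\to\infty$ then yields $|||u|||_{1},|||\D u|||_{1}<\infty$ (a homogeneous-equation analogue of Proposition \ref{proposition1}, with the contribution at infinity controlled via \eqref{1.13}), and with this in hand Proposition \ref{propositionradiationSaito} applied with $f=\varepsilon=0$ produces $\int_{K\ge R_1}|\D u-i\sqrt\lambda\nabla K u|^{2}(1+K)^{\delta-1}<\infty$; in particular \eqref{somunic} holds for $u$.

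\emph{Uniqueness under \eqref{1.13}: Rellich--Vekua and unique continuation.} This is the heart of the proof and the step I expect to be the main obstacle. Using the Appendix identities once more with a multiplier built purely from the eikonal phase ($\nabla\psi$ proportional to $\nabla K$ times a cutoff, this time without the $(1+K)^{\delta}$ weight), together with the flux relation $\Im\int_{\Sigma_T}\frac{\nabla K}{|\nabla K|}\cdot\D u\,\bar u=0$ and the second-derivative identity \eqref{segunda}, I would control the behaviour of $u$ across the level surfaces $\Sigma_T$: the radiation condition forces the outgoing part $\D u-i\sqrt\lambda\nabla K u$ to be negligible, and then the subsequential vanishing of the flux $\int_{\Sigma_T}(|\D u|^{2}+\lambda|u|^{2})$ established in the first part rules out any nontrivial asymptotics, so that $u\equiv0$ on $\{K\ge R_2\}$ for some $R_2$. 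The delicate point is that $\p$ is only long range, so the multiplier and all estimates must be adapted to the phase $K$ rather than to $|x|$, and the error terms — governed by \eqref{0.2} and \eqref{propietateg} — must be kept small through the smallness of $C^{*}$, without reintroducing the quantity being bounded. Finally, $u$ is an $H^{1}_{\mathrm{loc}}$ solution of \eqref{homoSaito} on $\Rd\setminus\{0\}$ vanishing on the open set $\{|x|>R_2\}$; under the standing hypotheses on $B,Q,\p$, the local integrability of $b_j$ and the gauge condition \eqref{gaugesaito}, the unique continuation theorem of Regbaoui \cite{R} — which admits the critical inverse-square singularity of $B$ at the origin — propagates this vanishing throughout $\Rd\setminus\{0\}$, whence $u\equiv0$ on $\Rd$ since $u\in L^{2}_{\mathrm{loc}}$.
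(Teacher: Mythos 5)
Your derivation of \eqref{1.13} from \eqref{somunic} is essentially the paper's: the vanishing of the flux $\Im\int_{\Sigma_T}\frac{\nabla K}{|\nabla K|}\cdot\D u\,\bar u$ for the homogeneous equation, combined with the pointwise square identity and the boundedness of $|\nabla K|$, is exactly how the paper closes that implication. The problem is the uniqueness part. Your third step --- ``the radiation condition forces the outgoing part to be negligible, and then the subsequential vanishing of the flux rules out any nontrivial asymptotics, so that $u\equiv 0$ on $\{K\ge R_2\}$'' --- is not an argument; it is a restatement of the conclusion. A Rellich--Vekua theorem in this variable-coefficient, long-range, magnetic setting is precisely the content that has to be proved, and the paper proves it by a concrete three-stage bootstrap that your sketch omits entirely: first the exterior a priori bound \eqref{1.1}, $\int_{|x|>R}(|\D u|^2+|u|^2)\le \frac{C}{R^2}\int_{R/2\le|x|\le R}|u|^2$, obtained with the bounded multipliers $\frac{x}{R}\theta_R$, $\frac{1}{2R}\theta_R$ and with \eqref{1.13} killing the boundary terms along the liminf sequence; then, by induction on $m$ with the multipliers $|x|^{m+1}\frac{x}{|x|}\theta_R$, finiteness of $\int_{|x|>R}|x|^m(|\D u|^2+|u|^2)$ for every $m$; and finally exponential decay $\int_{|x|>2R}|u|^2\le C_\delta e^{-tR^\delta}(1+\lambda+t^2/R)$, which upon letting $t\to\infty$ forces $u=0$ outside a ball, after which Regbaoui's unique continuation applies. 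Without some such quantitative decay mechanism, nothing prevents a nontrivial solution with $\int_{|x|=r}(|\D u|^2+\lambda|u|^2)\to 0$ along a sequence.

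A secondary flaw: in your second step you claim that, when running the weighted eikonal multipliers of Proposition \ref{propositionradiationSaito} over $\{|x|\le R_n\}$, ``all boundary terms on $\{|x|=R_n\}$ tend to $0$ by \eqref{1.13}.'' Those boundary terms carry the weight $(1+K)^{\delta}\sim R_n^{\delta}$, whereas \eqref{1.13} only gives a sequence along which the unweighted surface integral tends to zero, with no rate; the product $R_n^{\delta}\int_{|x|=R_n}(\cdots)$ need not vanish. The paper sidesteps this by not invoking the radiation condition in the uniqueness proof at all: the growing weights are only introduced after the polynomial integrability $\int_{|x|>R}|x|^m(|\D u|^2+|u|^2)<\infty$ has been established, which then supplies radii where the weighted surface integrals genuinely vanish.
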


\begin{proof}
The proof follows by the same method as in the proof of Theorem 1.5 in \cite{Z}. Although the analysis of the terms related to $\p$ are slightly different, the same conclusion can be drawn for this case. We give only the main ideas. For a fuller treatment we refer the reader to \cite{Z}.

The first step is to show
\begin{equation}\label{1.1}
\int_{|x| > R} (|\D u|^{2} + |u|^{2}) \leq \frac{C}{R^{2}} \int_{\frac{R}{2} \leq |x| \leq R} |u|^{2}.
\end{equation}
For this purpose, we multiply the equation (\ref{homoSaito}) by the combination of the symmetric and the antisymmetric multipliers $\nabla\psi \cdot \overline{\D u} + \frac{1}{2} \Delta\psi\bar{u} + \varphi\bar{u}$ and we integrate it on the ball $\{|x| < R_{1} \}$ for some $R_{1} > R>r_{0}$. Then, we define a cut off function $\theta$ with 
\begin{displaymath}
\theta(r) = \left\{ \begin{array}{ll}
1 & \textrm{if $r \geq 1$}\\
0 & \textrm{if $r < \frac{1}{2}$}
\end{array} \right.
\end{displaymath}
\noindent
and $\theta' \geq 0$ for all $r$. Set  $\theta_{R}(x) = \theta \left(\frac{|x|}{R} \right)$ and for $R >r_{0}\geq1$ choose the multipliers as follows
\begin{equation}\notag
\nabla\psi(x)= \frac{x}{R}\theta_{R}(x)
\end{equation}
and
\begin{equation}\notag
\varphi(x)=\frac{1}{2R}\theta_{R}(x).
\end{equation}
We do the computations for $R$ large enough and we pass to the limit in $R_{1}$.

The next goal is to prove that for $R > 2r_{0} \geq 1$ and any $m\geq 0$, then
\begin{equation}\notag
\int_{|x| > R} |x|^{m} (|\D u|^{2} + u|^{2}) < +\infty.
\end{equation}
This can be easily shown by induction.

Our last claim is to prove the exponential decay of the solution. We define the test functions
\begin{align}
& \nabla\psi(x)= |x|^{m+1}\frac{x}{|x|}\theta_{R}(x),\notag\\
& \varphi(x) = \frac{1}{2}|x|^{m}\theta_{R}(x),\notag
\end{align}
for $R >2r_{0}\geq 1$ and we put them into the identities (\ref{(4.3)}) and  (\ref{(4.11)}), respectively. We add both equalities and analysis similar of that in \cite{Z}, shows that for $R$ large enough and for any $t\geq 1$, $0<\delta<2/3$, $\lambda\geq\lambda_{0}$, it follows that
\begin{equation}\notag
\int_{|x|>2R} |u|^{2}  \leq C_{\delta}e^{-tR^{\delta}}\left(1+\lambda+\frac{t^{2}}{R}\right),
\end{equation}
being $C_{\delta}$ independent of $t$. Thus letting $t \to \infty$, we obtain that $u=0$ almost everywhere in $\{|x|>2R\}$. The unique continuation property (\cite{R}) implies then $u=0$ almost everywhere in $\Rd$.

In order to deduce (\ref{1.13}) from (\ref{somunic}), first observe that solutions of (\ref{homoSaito}) satisfy  
$$
\Im \int_{\Sigma_{T}} \frac{\nabla K}{|\nabla K|}\cdot \D u\bar{u} =0,
$$
just multiplying the equation by $\bar{u}$ and integrating over $B_{T}$, the inside of the closed surface $\Sigma_{T}=\{x: K(x,C^{*})=T\}$. Hence, we have  
$$\int_{\Sigma_{T}} (|\D u|^{2} + \lambda|\nabla K|^{2}|u|^{2}) d\sigma(x) = \int_{\Sigma_{T}} |\D u - i\sqrt{\lambda} \nabla Ku|^{2}d\sigma(x),$$ 
which together with (\ref{somunic}) gives (\ref{1.13}). 

\end{proof}

\section{Explicit radiation condition. Proof of Theorem \ref{teoremaexplicit}}\label{sectionexplicit}

This section establishes the relation between the energy estimate (\ref{enerthm}) and the Sommerfeld condition (\ref{radiacion}). We will see that when the variable index of refraction has the form $n(x)=\lambda(1+ \p(x))$ and an angular dependency like $n(x) \to n_{\infty}\left(\frac{x}{|x|}\right)$ as $|x|\to \infty$, then the Sommerfeld condition (\ref{radiacion}) at infinity still holds under the explicit form (\ref{explicit}).

\begin{proof}[Proof of Theorem \ref{teoremaexplicit}]

The proof follows \cite{PV2}. Let us first recall the tangential estimate
\begin{equation}\label{tangential}
\int \frac{|\D^{\bot}u|^{2}}{|x|} \leq C(N(f))^{2}
\end{equation}
proved in Theorem \ref{Theorem1.1} above and observe that (\ref{radiacion}) provides
\begin{equation}\label{barrixa}
\int |\D u-i\lambda^{1/2}\nabla K u|^{2}\frac{1}{1+|x|} \leq C \int (1+|x|)^{1+\delta}|f|^{2}dx.
\end{equation}
Hence, just looking at the tangential part of the above inequality, by (\ref{tangential}) it follows easily that
\begin{align}\label{t1}
\int_{|x| \geq r_{0}} \lambda|\nabla^{\bot} K u|^{2} \frac{1}{1+|x|} & \leq C\int \frac{|\D^{\bot}u|^{2}}{1+|x|} + C\int (1+|x|)^{1+\delta} |f|^{2}\\
& \leq C \int (1+|x|)^{1+\delta}|f^{2}|.\notag
\end{align}

Furthermore, since $n=\lambda(1+\p)$, from the eikonal equation (\ref{eikonal}) we have
\begin{equation}\notag
n - \lambda |\partial_{r} K|^{2}=|\lambda^{1/2}\nabla^{\bot} K|^{2}.
\end{equation}
Now, according to the properties (\ref{propietateg}) related to $\nabla K$, it is easy to see that  \hbox{$\partial_{r} K = g(x) + O(C^{*}) > 0$.} Thus we obtain
\begin{equation}\label{t2}
|\lambda^{1/2}\partial_{r} K - n^{1/2}|= \frac{|\lambda^{1/2}\nabla^{\bot} K|^{2}}{|\lambda^{1/2}\partial_{r}K + n^{1/2}|} \leq C|\lambda^{1/2}\nabla^{\bot} K|^{2}.
\end{equation}
In addition, looking at the radial part in (\ref{barrixa}) we have
\begin{equation}\label{t3}
\int_{|x| \geq r_{0}} |\D^{r}u-i\lambda^{1/2}\partial_{r}K u|^{2} \frac{1}{1+|x|} \leq C\int (1+|x|)^{1+\delta}|f|^{2}.
\end{equation}
Consequently, by (\ref{t1}), (\ref{t2}), (\ref{t3}) and the fact that
\begin{align}\notag
\left|\D u - in^{1/2}\frac{x}{|x|}u\right|^{2}  &\leq |\D^{r}u - i\sqrt{\lambda}\partial_{r}K u|^{2} + |\sqrt{\lambda}\partial_{r}Ku  - n^{\frac{1}{2}}u|^{2} + |\D^{\bot}u|^{2}, \notag
\end{align} 
we get (\ref{nradi}) which is our first claim.

Finally, assuming $|n-n_{\infty}| \leq C(1+|x|)^{-\delta}$ and using (\ref{1.10}) we conclude that 
\begin{equation}\notag
\int \left|\D u - in_{\infty}^{1/2}\frac{x}{|x|}u \right|^{2} \frac{1}{1+|x|} \leq C\int |f|^{2}(1+|x|)^{1+\delta} 
\end{equation}
and the proof is complete.

\end{proof}

\section{Appendix}

In this section we state the key equalities that have been used in the proofs of the main results of this work. 

These integral identities are obtained by the standard technique of Morawetz multipliers, using integration by parts (see \cite{F}, Lemma 2.1. and \cite{PV1}, Lemma 2.1). In order to carry out the integration by parts argument below, we need some regularity in the solution $u$. In general, it is enough to know that $u\in H^{1}_{A}(\Rd)$. See \cite{Z}, Appendix for more details.

Let us consider the electromagnetic Helmholtz equation
\begin{equation}\label{2.1}
(\nabla + ib)^{2} u + \lambda(1+\p) u + Q u + i\varepsilon u = f, \quad \quad \lambda , \varepsilon >0.
\end{equation}

\begin{lem}\label{appendix1} Let $\varphi: \Rd \to \R$ be regular enough. Then, the solution $u\in H^{1}_{A}(\Rd)$ of the Helmholtz equation (\ref{2.1}) satisfies
\begin{align}\label{(4.11)}
& \int  \varphi\lambda |u|^{2} - \int \varphi |\D u|^{2}  + \int \varphi (\p + Q) |u|^{2}- \Re \int \nabla \varphi \cdot \D u \bar{u}=  \Re\int  \varphi f\bar{u},
\end{align}
\begin{equation}
\varepsilon \int \varphi |u|^{2} - \Im \int \nabla \varphi\cdot \D u\bar{u} = \Im\int \varphi f\bar{u}.\label{(4.2)}
\end{equation}
\end{lem}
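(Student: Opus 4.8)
The plan is to obtain both identities from a single device: multiply the equation (\ref{2.1}) by $\varphi\bar u$, integrate over $\Rd$, and separate real and imaginary parts. The only non-algebraic input is the integration-by-parts rule for the magnetic gradient: since $\varphi$ is real-valued, $\D(\varphi u)=(\nabla\varphi)u+\varphi\D u$, and since $\overline{ib\,w}=-ib\,\bar w$ one has, for suitable $v$ and vector fields $w$, the identity $\int \bar v\,(\nabla+ib)\cdot w\,dx=-\int \overline{\D v}\cdot w\,dx$. Taking $v=\varphi u$ and $w=\D u$ gives
$$
\int \varphi\,\bar u\,\D^{2}u = -\int \overline{\D(\varphi u)}\cdot\D u = -\int \nabla\varphi\cdot\D u\,\bar u - \int \varphi\,|\D u|^{2}.
$$
Inserting this into the integrated equation $\int \varphi\bar u\,\D^{2}u+\int\varphi\big(\lambda(1+\p)+Q\big)|u|^{2}+i\varepsilon\int\varphi|u|^{2}=\int\varphi f\bar u$ produces one complex-valued identity.

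From that identity, (\ref{(4.11)}) and (\ref{(4.2)}) are obtained by reading off real and imaginary parts. Since $\varphi$, $\lambda$, $\p$, $Q$ are real and $|u|^{2}\geq 0$, the terms $\int\varphi|\D u|^{2}$ and $\int\varphi(\lambda(1+\p)+Q)|u|^{2}$ are real, so they disappear from the imaginary part, leaving $\varepsilon\int\varphi|u|^{2}-\Im\int\nabla\varphi\cdot\D u\,\bar u=\Im\int\varphi f\bar u$, which is (\ref{(4.2)}). Dually, $i\varepsilon\int\varphi|u|^{2}$ is purely imaginary and drops out of the real part, giving $\int\varphi\lambda|u|^{2}-\int\varphi|\D u|^{2}+\int\varphi(\lambda\p+Q)|u|^{2}-\Re\int\nabla\varphi\cdot\D u\,\bar u=\Re\int\varphi f\bar u$, i.e. (\ref{(4.11)}) (with the appendix's convention of writing the electric potential $\lambda\p$ as $\p$).

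The delicate point, and the one I expect to be the real obstacle, is justifying the integration by parts: $\varphi$ need not be compactly supported and $u$ is only assumed to lie in $H^{1}_{A}(\Rd)$, so $\D^{2}u$ is a priori only a distribution and one must rule out a boundary contribution at infinity. The standard fix, which I would follow as in \cite{Z}, Appendix (see also \cite{F}, \cite{PV1}), is to first carry out the computation against $\varphi\chi_{R}\bar u$ with $\chi_{R}$ a smooth radial cutoff, $\chi_{R}\equiv 1$ on $\{|x|\leq R\}$ and $\mathrm{supp}\,\chi_{R}\subset\{|x|\leq 2R\}$, using the equation to replace $\D^{2}u$ by $f-\lambda(1+\p)u-Qu-i\varepsilon u\in L^{2}_{loc}$ so that every term is a genuine integral; then let $R\to\infty$. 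The extra terms created by $\nabla\chi_{R}$ are $O(R^{-1})$ in amplitude and supported in an annulus, hence vanish in the limit once one knows $u,\D u\in L^{2}$ together with the (weighted) decay enjoyed by the solutions under consideration. Since this limiting argument is written out in full in \cite{Z}, I would confine the present proof to the formal manipulation above and cite it for the justification.
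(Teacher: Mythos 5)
Your proof is correct and follows exactly the route the paper intends: multiply (\ref{2.1}) by $\varphi\bar u$, integrate by parts using the magnetic Leibniz rule, and split into real and imaginary parts, with the rigorous justification (cutoff and limit) deferred to \cite{Z}, \cite{F}, \cite{PV1} just as the paper does. Your remark that the $\p$-term in (\ref{(4.11)}) should carry the factor $\lambda$ (consistent with $n=\lambda(1+\p)$ and with the $\lambda\nabla\p$ term in (\ref{(4.3)})) is also accurate.
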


\begin{remark}
Note that if we take $\varphi =1$, then we obtain the following a-priori estimates 
\begin{align}\label{a+++0}
\varepsilon \int  |u|^{2} \leq \int |f| |u|
\end{align}
\begin{align}\label{b+++0}
\int |\D u|^{2} & \leq \int (\lambda + \p + Q)|u|^{2}  + \int |f|||u|,
\end{align}
that have been very useful throughout the paper.
\end{remark}

\begin{lem}\label{appendix2}
Let $\psi: \mathbb{R}^d \longmapsto \mathbb{R}$ be regular enough. Then, any solution $u\in H^{1}_{A}(\Rd)$ of the equation (\ref{2.1}) satisfies

\begin{align}\label{(4.3)}
&\int \D u\cdot D^{2} \psi \cdot \overline{\D u} +\Re \frac{1}{2}\int \nabla(\Delta \psi)\cdot \D u \bar{u}+ \varepsilon \Im\int \nabla \psi\cdot \overline{\D u}u \\
& - \Im  \int \sum_{j,k=1}^{d} \frac{\partial \psi}{\partial x_{k}} B_{kj}(\D)_{j} u\bar{u} -\frac{1}{2}\int \Delta\psi Q |u|^{2} -\Re\int Q\nabla\psi\cdot \D u\bar{u} \notag\\
& + \frac{\lambda}{2}\int \nabla \p \cdot \nabla\psi |u|^{2} = -\Re \int f\nabla \psi \cdot \overline{\D u} -\frac{1}{2}\Re\int f \Delta\psi\bar{u},\notag
\end{align}
where $D^{2} \psi$ denotes the Hessian of $ \psi$.
\end{lem}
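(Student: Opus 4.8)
The plan is to derive (\ref{(4.3)}) by the standard Morawetz multiplier method, applied now to the magnetic operator. I would use the self-adjoint multiplier $P = \nabla\psi\cdot\overline{\D u} + \tfrac{1}{2}(\Delta\psi)\bar u$: multiply equation (\ref{2.1}) by $P$, integrate over $\Rd$, and take the real part, so that
$$\Re\int \D^2 u\, P + \Re\int\lambda(1+\p)u\,P + \Re\int Qu\,P + \Re\int i\varepsilon u\,P = \Re\int f\,P.$$
All the integrations by parts rely on the magnetic rule $\int(\D_j v)\bar w = -\int v\,\overline{\D_j w}$, which holds because each $b_j$ is real-valued, together with the pointwise identity $\Re\big(u\,\overline{\D_k u}\big) = \tfrac{1}{2}\partial_k|u|^2$. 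Since a priori $u\in H^1_A(\Rd)$ only, I would first prove the identity for smooth compactly supported $u$ and then pass to the limit by the approximation argument already used in \cite{Z}.

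The core computation is the principal term $\Re\int\D^2 u\,P$. Writing $\D^2 u = \sum_j \D_j\D_j u$ and integrating by parts once against $\nabla\psi\cdot\overline{\D u}$ produces the Hessian term $-\int\D u\cdot D^2\psi\cdot\overline{\D u}$ together with the remainder $-\Re\sum_{j,k}\int \partial_k\psi\,\D_j u\,\overline{\D_j\D_k u}$. Here the magnetic feature enters: $\D_j$ and $\D_k$ no longer commute, and the commutator $[\D_j,\D_k] = -iB_{jk}$, with $B_{jk} = \partial_k b_j - \partial_j b_k$, is exactly what generates the magnetic-field term $\Im\int\sum_{j,k}\partial_k\psi\,B_{kj}(\D)_j u\,\bar u$ (after using the antisymmetry $B_{jk} = -B_{kj}$). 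The symmetric remainder collapses, via $\Re(\D_j u\,\overline{\D_k\D_j u}) = \tfrac{1}{2}\partial_k|\D_j u|^2$ and one further integration by parts, to $\tfrac{1}{2}\int(\Delta\psi)|\D u|^2$; the companion piece $\tfrac{1}{2}\Re\int\D^2 u\,(\Delta\psi)\bar u$ of $P$ yields $-\tfrac{1}{2}\Re\int\nabla(\Delta\psi)\cdot\D u\,\bar u - \tfrac{1}{2}\int(\Delta\psi)|\D u|^2$, so the two $(\Delta\psi)|\D u|^2$ contributions cancel and only the $\nabla(\Delta\psi)$ term survives.

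For the remaining terms I would keep the potential and absorption contributions essentially un-integrated. Using $\Re(u\,\overline{\D_k u}) = \tfrac{1}{2}\partial_k|u|^2$ and one integration by parts, the $\lambda(1+\p)$ term gives $-\tfrac{\lambda}{2}\int(1+\p)(\Delta\psi)|u|^2 - \tfrac{\lambda}{2}\int\nabla\p\cdot\nabla\psi|u|^2$ from its gradient part and $+\tfrac{\lambda}{2}\int(1+\p)(\Delta\psi)|u|^2$ from the $\tfrac{1}{2}(\Delta\psi)\bar u$ part, so the $(1+\p)(\Delta\psi)$ terms cancel and only $-\tfrac{\lambda}{2}\int\nabla\p\cdot\nabla\psi|u|^2$ remains. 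The term $\Re\int Qu\,P$ reproduces $\Re\int Q\nabla\psi\cdot\D u\,\bar u + \tfrac{1}{2}\int Q(\Delta\psi)|u|^2$, while $\Re\int i\varepsilon u\,P$ reduces to $-\varepsilon\Im\int u\,\nabla\psi\cdot\overline{\D u}$, the $\tfrac{1}{2}(\Delta\psi)\bar u$ part dropping out since $\Re(i|u|^2)=0$. Collecting everything, moving the $f$-terms to the right-hand side, and multiplying the whole identity by $-1$ then yields (\ref{(4.3)}).

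The main obstacle is the magnetic principal term: one must carry out the commutation of $\D_j$ and $\D_k$ correctly so that the field $B$ appears with the right index placement and sign, and simultaneously check that all the $(\Delta\psi)$-weighted kinetic contributions cancel between the two pieces of $P$. The sign of the $B$-term is entirely fixed by the commutator $[\D_j,\D_k]=-iB_{jk}$ and the convention for $B_{jk}$, so the delicate bookkeeping is concentrated there; everything else is routine integration by parts justified by density.
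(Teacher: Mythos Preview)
Your proposal is correct and is precisely the standard Morawetz-multiplier computation the paper invokes: the paper does not actually prove Lemma~\ref{appendix2} but only states that it follows from integration by parts as in \cite{F}, \cite{PV1}, and \cite{Z}, and your multiplier $P=\nabla\psi\cdot\overline{\D u}+\tfrac{1}{2}(\Delta\psi)\bar u$, the commutator $[\D_j,\D_k]=-iB_{jk}$, and the density argument are exactly that method spelled out.
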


\end{document}